\newtheorem{thm}{Theorem}[section]
\newtheorem{lemma}[thm]{Lemma}
\newtheorem{proposition}[thm]{Proposition}
\newtheorem{definition}[thm]{Definition}
\newcommand{\p}{\mathbb{P}}
\newcommand{\q}{\mathbb{Q}}
\newcommand{\dom}{\mathrm{dom}}
\newcommand{\ran}{\mathrm{ran}}
\newcommand{\h}{\mathrm{ht}}
\newcommand{\res}{\upharpoonright}
\begin{document}

\title{Suslin tree preservation and club isomorphisms}

\author{John Krueger}

\address{John Krueger \\ Department of Mathematics \\ 
	University of North Texas \\
	1155 Union Circle \#311430 \\
	Denton, TX 76203}
\email{jkrueger@unt.edu}

\date{February 2022; revised December 2022}

\thanks{2020 \emph{Mathematics Subject Classification}: 
Primary 03E35; Secondary 03E05, 03E40.}

\thanks{\emph{Key words and phrases}: Suslin tree, Aronszajn tree, free tree, club isomorphism}

\thanks{This material is based upon work supported by the Simons Foundation under Grant 631279}

\begin{abstract}
	We construct a model of set theory 
	in which there exists a Suslin tree and satisfies that any two 
	normal Aronszajn trees, neither of which contains a Suslin subtree, are club isomorphic. 
	We also show that if $S$ is a free normal Suslin tree, then for any positive integer $n$ 
	there is a c.c.c.\ forcing extension in which 
	$S$ is $n$-free but all of its derived trees of dimension greater 
	than $n$ are special.
\end{abstract}

\maketitle

\section{Introduction}

Baumgartner \cite{BDISS} proved that Martin's axiom implies that  
all Aronszajn trees are special. 
This consequence of Martin's axiom 
in turn implies Suslin's hypothesis, since any Suslin tree is 
a nonspecial Aronszajn tree. 
Later Abraham-Shelah \cite{AS} constructed a model 
in which there exists a Suslin tree and any Aronszajn tree 
which does not contain a Suslin subtree is special. 
Abraham-Shelah \cite{AS} also introduced the property that any two normal Aronszajn 
trees are club isomorphic, which implies that all Aronszajn trees are special, 
and proved its consistency from \textsf{ZFC} and that it follows 
from the proper forcing axiom.

In light of these results, a natural question is whether there is a 
variation of the property that all normal Aronszajn 
trees are club isomorphic which is consistent with the existence of a Suslin tree. 
In this article we answer this question by constructing a model in which 
there exists a Suslin tree and any two 
normal Aronszajn trees, neither of which contains a Suslin subtree, are club isomorphic. 
We also prove that this statement follows from Todorcevic's forcing axiom 
$\textsf{PFA}(S)$.

The main method which we use is that of preserving a Suslin tree $S$ after forcing something 
about an Aronszajn tree $T$ which is in some sense not near it. 
More specifically, we are concerned with the relation that 
forcing with $S$ below some element $x \in S$ 
adds an uncountable branch to the Aronszajn tree $T$, or equivalently, 
that there exists a strictly increasing and height preserving 
map from a club set of levels of $S_x$ into $T$. 
It turns out that if there does not exist such a map, then we can force things about 
$T$ while preserving $S$ being Suslin. 

In addition to our main result discussed above, we give another application 
of the idea of Suslin tree preservation to the topic of free trees. 
We show that if $S$ is a free Suslin tree, then for any positive integer $n$ there 
exists a c.c.c.\ forcing which forces that $S$ is $n$-free, but all of the 
derived trees of $S$ of dimension greater than $n$ are special. 
This shows that in contrast to the property of homogeneity of Suslin trees, which 
is upwards absolute, freeness is highly malleable by forcing.

\section{Preliminaries}

We assume that the reader is familiar with Aronszajn and Suslin trees, as well as 
the basics of forcing and forcing axioms. 
In this section we go over our notation, review some standard background results, 
and prove some elementary lemmas which we need later.

All of the trees we discuss in this article have height $\omega_1$. 
An \emph{$\omega_1$-tree} is a tree of height $\omega_1$ which has countable levels. 
We write $\h_T(x)$ for the height of an element $x$ of a tree $T$, 
$T_\alpha$ for level $\alpha$ of $T$ consisting of all $x$ with $\h_T(x) = \alpha$, 
$T \res \alpha = \bigcup \{ T_\beta : \beta < \alpha \}$, 
and more generally, $T \res C$ for the set 
of $x$ in $T$ with $\h_T(x) \in C$. 
If $\alpha < \h_T(x)$ we write $x \res \alpha$ for the unique $y <_T x$ with height $\alpha$. 
For incomparable elements $x$ and $y$ of $T$, 
$\Delta_T(x,y)$ is the order type of the set 
of $z$ below both $x$ and $y$.

A \emph{branch} of a tree $T$ is a maximal chain. 
If $B$ is a branch and $\alpha$ is an ordinal 
less than its order type, $B(\alpha)$ denotes the 
element of $B$ of height $\alpha$. 
An \emph{antichain} of $T$ is a set of pairwise incomparable elements of $T$. 
A tree $T$ is \emph{normal} if it has a root, every element of $T$ has at least two 
immediate successors, every element of $T$ has some element above it at any higher level, 
and there is at most one upper bound to any chain of $T$ whose order type is a limit ordinal. 
A \emph{subtree} of $T$ is any subset of $T$ with the order inherited from $T$.

An \emph{Aronszajn tree} is an $\omega_1$-tree with no cofinal branch, and a 
\emph{Suslin tree} is a tree with no uncountable chain or antichain.  
A tree $T$ of height $\omega_1$ is \emph{special} if it has a 
\emph{specializing function}, 
which is a function $f : T \to \omega$ such that $x <_T y$ implies $f(x) \ne f(y)$. 
Every special $\omega_1$-tree is Aronszajn, and any Suslin tree is a nonspecial Aronszajn tree. 
A function $f : T \to U$ between trees is 
\emph{strictly increasing} if $x <_T y$ implies $f(x) <_U f(y)$, and 
is \emph{height preserving} if $\h_T(x) = \h_U(f(x))$ for all $x \in T$. 
For trees $T$ and $U$ of height $\omega_1$, we say that $T$ and $U$ are 
\emph{club isomorphic} if there exists a club $C \subseteq \omega_1$ 
such that $T \res C$ and $U \res C$ are isomorphic. 

The next result is an essential tool for forcings involving Aronszajn trees.

\begin{thm}[Baumgartner {\cite[Chapter 4]{BDISS}}]
	Suppose that $T$ is an Aronszajn tree and $\{ x_\alpha : \alpha < \omega_1 \}$ 
	is a collection of pairwise disjoint finite subsets of $T$. 
	Then there exist $\alpha < \beta$ such that 
	every element of $x_\alpha$ is incomparable in $T$ with every element of $x_\beta$.
\end{thm}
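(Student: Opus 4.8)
The plan is to argue by contradiction. Suppose that for every $\alpha<\beta$ some element of $x_\alpha$ is comparable in $T$ with some element of $x_\beta$; call such a pair \emph{linked}, so the conclusion to be proved is exactly that some pair fails to be linked. From the assumption that all pairs are linked I would manufacture a chain of order type $\omega_1$ in $T$, which is impossible: a chain of length $\omega_1$ is a cofinal branch, and $T$ is Aronszajn, so in fact every chain in $T$ is countable. The only feature of Aronszajnness available is the absence of an uncountable branch, so the whole difficulty is to force such a branch out of the linking hypothesis.

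First I would normalize the family. By pigeonhole I pass to an uncountable subfamily on which all the $x_\alpha$ have a common size $n$. Since each $T \res \gamma$ is countable and the $x_\alpha$ are pairwise disjoint, for every $\gamma<\omega_1$ only countably many $x_\alpha$ can meet $T \res \gamma$; hence uncountably many lie entirely above level $\gamma$, and a routine recursion extracts an uncountable subfamily, reindexed as $\langle x_\alpha : \alpha<\omega_1\rangle$, whose height-intervals are separated and increasing: writing $m_\alpha$ and $M_\alpha$ for the least and greatest of the values $\h_T(x)$ for $x \in x_\alpha$, I arrange $M_\alpha<m_\beta$ whenever $\alpha<\beta$. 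With this separation "linked" simplifies, since if $\alpha<\beta$ and $x\in x_\alpha$ is comparable with $y\in x_\beta$ then $\h_T(x)\le M_\alpha<m_\beta\le\h_T(y)$, so $x<_T y$ and $x=y\res\h_T(x)$; that is, $x$ is a proper predecessor of an element of $x_\beta$. The engine of the argument is then a covering observation: fix any $\gamma<\omega_1$, choose $\beta$ with $m_\beta\ge\gamma$, and let $F_\gamma$ be the set of at most $n$ points $y\res\gamma$ for $y\in x_\beta$. If $x_\alpha$ lies entirely below level $\gamma$, then $M_\alpha<\gamma\le m_\beta$ forces $\alpha<\beta$, so the linked pair $x_\alpha,x_\beta$ supplies some $x\in x_\alpha$ below some $y\in x_\beta$, whence $x$ is a predecessor of $y\res\gamma\in F_\gamma$. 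Thus at \emph{every} level the whole family is captured by at most $n$ branches.

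The goal is to patch these local, $\le n$-point covers into at most $n$ honest cofinal branches of $T$, any one of which is an uncountable branch and finishes the contradiction. I would organize the candidate covers into a tree $U$ whose node at level $\gamma$ is a set of the form $\{\,y\res\gamma : y\in x_\beta\,\}$ for some $\beta$ with $m_\beta\ge\gamma$, ordered by coordinatewise restriction; $U$ has countable levels and, by the covering observation, nonempty levels cofinally, so it is an $\omega_1$-tree. An uncountable branch through $U$ yields a coherent sequence of $\le n$-point subsets of the levels of $T$, whose inverse limit is a union of at most $n$ branches of $T$ of length $\omega_1$ (here the finite bound $n$ is essential, so that the inverse limit is a finite union of genuine branches rather than spreading out).

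The hard part is precisely this patching step, and it is where I expect the real work to lie. The covers witnessed at different levels come from different sets $x_\beta$ and need not cohere, so nothing so far prevents $U$ from itself being Aronszajn, in which case no single approximate branch survives to level $\omega_1$; equivalently, a naive recursion building an increasing chain of points each lying below uncountably many $x_\beta$ can lose uncountability at limit stages and die. To overcome this I would bring in a countable elementary submodel $M\prec H_\theta$ with $T,\langle x_\alpha\rangle\in M$ and $\delta=M\cap\omega_1$: taking $\beta$ with $m_\beta\ge\delta$, the $\le n$ points $\{y\res\delta : y\in x_\beta\}$ in $T_\delta$ capture every $x_\alpha$ with $\alpha<\delta$, and by pigeonhole one of their branches meets $x_\alpha$ for cofinally many $\alpha<\delta$. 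The decisive move is then to use the normality of $T$ — at most one node extends a given cofinal chain — together with elementarity of $M$ to argue that such a branch is forced to continue past $\delta$, so that the "future" seen by the model feeds back to keep the chain alive through limit levels, and to iterate this to length $\omega_1$. I anticipate the main technical obstacle to be exactly the control at limit stages: combining the linking hypothesis with normality to show that only boundedly many (indeed at most $n$) candidate continuations can be fat at each limit level, so that the chain cannot be lost and an uncountable branch genuinely emerges.
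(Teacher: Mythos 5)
The paper offers no proof of this statement---it is quoted as background from Baumgartner's thesis---so your proposal has to stand on its own, and it does not: what you have is a correct but routine normalization followed by an explicitly unfinished key step. The reductions (constant size $n$, separated and increasing height intervals, the simplification of ``linked'' to ``some element of $x_\alpha$ is a proper predecessor of some trace $y \res \gamma$ with $y \in x_\beta$''), and the covering observation that under the contradiction hypothesis every level $\gamma$ carries a set $F_\gamma$ of at most $n$ traces capturing every $x_\alpha$ lying below $\gamma$, are all correct. But the entire content of the theorem is the patching step that you yourself flag as an ``anticipated obstacle.'' Your auxiliary tree $U$ has \emph{countable} levels, not finite ones (one node for each $\beta$ with $m_\beta \ge \gamma$, and these traces can be pairwise distinct), so the K\"onig--Kurepa argument for $\omega_1$-trees with finite levels does not apply and, as you concede, $U$ may itself be Aronszajn; at that point no branch of $U$, and hence no uncountable branch of $T$, is produced, and there is no contradiction.

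The sketched repair does not close this gap. Your hoped-for dichotomy ``at most $n$ candidate continuations are fat at each limit level'' founders on the choice of filter measuring fatness. If \emph{fat} means ``a trace of $x_\beta$ for uncountably many (or stationarily many) $\beta$,'' then fat nodes exist at every level by pigeonhole on the countable level $T_\gamma$, but the attachment sets of distinct same-level fat nodes are only $(n{+}1)$-wise disjoint (a single $x_\beta$ has at most $n$ traces at a given level), which is fully compatible with infinitely many fat nodes per level and with the fat subtree being Aronszajn---so no width bound and no branch. If instead \emph{fat} means ``a trace for club-many $\beta$,'' the width bound $\le n$ is immediate since any $n+1$ clubs intersect, but nothing in your argument produces even one club-fat node at each level. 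The elementary-submodel move does not escape this dilemma: elementarity upgrades the single witness $x_\beta$ above $\delta = M \cap \omega_1$ to ``every node of its trace chain has uncountable attachment,'' which is exactly the too-weak notion; and ``the branch continues past $\delta$'' was never the difficulty (the trace $y \res \delta$ continues trivially), the difficulty is coherence among the choices made at \emph{different} limit levels, which come from different sets $x_\beta$. A correct proof needs genuinely new input at precisely this point---for instance Baumgartner's inductive argument, or a pressing-down argument in which the chain is assembled from restrictions $t^* \res \gamma$ of a \emph{single} node $t^*$, so that coherence is automatic rather than patched together. Separately, a small but real foot fault: you invoke normality of $T$ (``at most one node extends a given cofinal chain''), which the statement does not assume; note that the paper applies this theorem in Lemma 3.8 to the disjoint union of two trees, which is never normal, so the general statement is actually needed.
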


In this article we are interested in preserving the Suslin property 
of a given Suslin tree after forcing. 
When iterating forcing, we only need to verify Suslin tree preservation at successor stages.

\begin{thm}[Abraham-Shelah {\cite[Theorem 3.1]{AS2}}, Miyamoto {\cite[Lemma 1.2]{MIYAMOTO}}]
	Let $S$ be a Suslin tree. 
	Then the property of a forcing poset being proper and forcing that 
	$S$ is Suslin is preserved by any countable support forcing iteration.
\end{thm}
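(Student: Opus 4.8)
The plan is to isolate a local, elementary-submodel characterization of the preservation property and then to run the standard countable support iteration machinery against that characterization, the only genuinely new work occurring at limit stages. Since $S$ is Suslin it is ccc, and a proper forcing preserves $\omega_1$, so $S$ remains an $\omega_1$-tree in any proper extension; there $S$ stays Suslin precisely when it acquires no new uncountable antichain, i.e.\ when it remains ccc. Accordingly, I would first record the equivalence that, for proper $\mathbb{P}$, the statement ``$\mathbb{P}$ forces $S$ Suslin'' is equivalent to a generic-condition condition: for all large $\theta$ and every countable $N \prec H_\theta$ with $\mathbb{P}, S \in N$ and $\delta = N \cap \omega_1$, for every $p \in \mathbb{P} \cap N$ and every node $t \in S_\delta$ there is $q \leq p$ which is $(N,\mathbb{P})$-generic and which moreover forces that $t$ is $(N[\dot G], S)$-generic (equivalently, $(q,t)$ is $(N, \mathbb{P} \times S)$-generic). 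The content is that, because $S$ is Suslin, every node of $S_\delta$ is already $(N,S)$-generic in $V$, while an uncountable antichain of $S$ appearing in the extension reflects into $N$ as a name for a maximal antichain that some $t \in S_\delta$ fails to meet; preserving Suslinness is thus exactly preserving the $S$-genericity of level-$\delta$ nodes through the forcing.

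The successor step is immediate in the ``forces Suslin'' formulation: if $\mathbb{P}_\alpha$ forces $S$ Suslin and $\mathbb{P}_\alpha$ forces that $\dot{\mathbb{Q}}_\alpha$ forces $S$ Suslin, then $\mathbb{P}_{\alpha+1} = \mathbb{P}_\alpha * \dot{\mathbb{Q}}_\alpha$ forces $S$ Suslin, since $S$ stays ccc after forcing first with $\mathbb{P}_\alpha$ and then with $\dot{\mathbb{Q}}_\alpha$; properness of the two-step iteration is classical. I would restate the generic characterization at successors only for bookkeeping in the limit induction.

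The limit step is the main obstacle. Let $\gamma$ be a limit and let $\langle \mathbb{P}_\alpha, \dot{\mathbb{Q}}_\alpha : \alpha < \gamma \rangle$ be a countable support iteration in which each $\mathbb{P}_\alpha$ is proper and forces $S$ Suslin and each $\dot{\mathbb{Q}}_\alpha$ is forced to do the same. Properness of $\mathbb{P}_\gamma$ is Shelah's theorem, so it remains to verify the generic characterization for $\mathbb{P}_\gamma$: fix $N, \delta, p, t$ as above and build $q \leq p$ that is $(N, \mathbb{P}_\gamma)$-generic and forces $t$ to be $(N[\dot G_\gamma], S)$-generic. I would produce $q$ by a fusion paralleling Shelah's proof of preservation of properness: fix an increasing sequence $\langle \alpha_n \rangle$ cofinal in $N \cap \gamma$ and a bookkeeping enumeration of the dense subsets of $\mathbb{P}_\gamma$ lying in $N$ together with the $\mathbb{P}_\alpha$-names in $N$ for maximal antichains of $S$, then recursively construct a decreasing sequence of conditions meeting those dense sets while at each coordinate $\alpha_n$ invoking the local $S$-preservation of $\dot{\mathbb{Q}}_{\alpha_n}$, via its generic characterization computed in $V^{\mathbb{P}_{\alpha_n}}$, to keep $t$ generic for $S$ over the relevant submodel.

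The hard part is that at limit substages of the fusion one must simultaneously preserve the $(N, \mathbb{P}_\gamma)$-genericity of the approximating conditions and the $(N[\dot G_{\alpha_n}], S)$-genericity of the single external node $t$, and then argue that the fusion limit $q$ genuinely forces $t$ to be $S$-generic over the full $N[\dot G_\gamma]$ rather than merely over each initial segment. This requires a continuity argument: below a suitable condition, every name in $N$ for a maximal antichain of $S$ in the final extension is decided cofinally by the $\mathbb{P}_{\alpha_n}$, so that catching these antichains cofinally along the fusion suffices. Interleaving the antichain-catching with the dense-set-catching so that both genericity demands are realized at the limit is the technical crux, and it is precisely where the argument is strictly more delicate than the bare preservation of properness. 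One could alternatively organize this as an application of Shelah's general preservation theorem to an ``$S$-augmented'' version of the iteration, but I expect the self-contained fusion described above to be the cleaner route.
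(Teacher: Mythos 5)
First, note that the paper itself does not prove this theorem: it is stated with citations to Abraham--Shelah and Miyamoto, and only the finite-support analogue (Theorem 2.3) is proved in the paper, by a completely different $\Delta$-system argument. So your attempt must be judged on its own merits, and it has a genuine gap at its foundation: the elementary-submodel characterization is mis-stated, with the quantifiers on $t$ and $q$ in the wrong order. You ask that for every $p \in \mathbb{P} \cap N$ \emph{and every} $t \in S_\delta$ there exist $q \le p$ which is $(N,\mathbb{P})$-generic and forces $t$ to be $(N[\dot G],S)$-generic, so $q$ may depend on $t$. The correct characterization (Miyamoto's) requires a \emph{single} $(N,\mathbb{P})$-generic $q \le p$ forcing \emph{simultaneously} that every $t \in S_\delta$ is $(N[\dot G],S)$-generic. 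The forward direction is the same for both versions (any generic $q$ works for all $t$ at once, by Suslinness of $S$ in the extension), but the backward direction of your equivalence is false, and it is exactly what your whole strategy rests on. Concretely, let $S$ be a $2$-free Suslin tree (these exist under $\Diamond$, as with the free trees in Section 4 of this paper) and take $\mathbb{P} = S$. Forcing with $S$ always kills the Suslinness of $S$: if $B$ is the generic branch, the immediate successors of nodes of $B$ that are off $B$ form an uncountable antichain. Yet $\mathbb{P} = S$ satisfies your per-node condition: given $p \in N \cap S$ and $t \in S_\delta$, choose $q \in S_\delta$ above $p$ with $q \ne t$; setting $\xi = \Delta_S(q,t)$, the pair $(q,t)$ is a level-$\delta$ node of the derived tree $S_{q \res (\xi+1)} \otimes S_{t \res (\xi+1)}$, which lies in $N$ and is Suslin by $2$-freeness, so $(q,t)$ is $(N, S \times S)$-generic, which by your own parenthetical equivalence means $q$ is $(N,S)$-generic and forces $t$ to be $(N[\dot G],S)$-generic. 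Your reflection argument (``an uncountable antichain reflects into $N$ as a name that some $t \in S_\delta$ fails to meet'') only runs with the simultaneous version: one $q$ forcing all of $S_\delta$ generic forces any maximal antichain named in $N$ into $S \res \delta$, hence countable; with $q$ chosen after $t$, the offending $t$ depends on the generic and the argument cannot start.

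The same defect propagates into your limit-stage plan: you explicitly fuse toward preserving the genericity of ``the single external node $t$,'' but even if that fusion succeeded for each $t$ separately, the counterexample above shows the conclusion would not follow. In Miyamoto's actual proof the simultaneity is precisely what makes the fusion work: the invariant carried along the countable support iteration at each $\alpha_n$ is that one condition is $(N,\mathbb{P}_{\alpha_n})$-generic and forces \emph{every} node of $S_\delta$ to be generic over $N[\dot G_{\alpha_n}]$, and this is a sustainable induction hypothesis through limits, whereas the per-node statement is not. Beyond this, your limit case is a plan rather than a proof --- you name the interleaving and continuity issues as ``the technical crux'' without executing them --- so even after repairing the characterization, the core of the argument remains to be written. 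The repair is clear, though: state the characterization with a single $q$ working for all $t \in S_\delta$ at once, and run your proposed fusion with that as the invariant; this is essentially the cited proof.
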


We also want to preserve a Suslin tree after a finite support forcing 
iteration of c.c.c.\ forcings.

\begin{thm}
	Let $S$ be a Suslin tree. 
	Then the property of a forcing poset being c.c.c.\ and forcing that $S$ is Suslin 
	is preserved by any finite support forcing iteration.
\end{thm}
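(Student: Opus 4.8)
The plan is to reduce the statement to a product characterization and then argue by induction on the length of the iteration. The key fact I would first establish is the standard equivalence, for a Suslin tree $S$ and a c.c.c.\ poset $\mathbb{P}$, between $\mathbb{P}$ forcing that $S$ is Suslin and the product $\mathbb{P} \times S$ being c.c.c. This follows from the isomorphism $\mathbb{P} \times S \cong \mathbb{P} \ast \check{S}$, where $\check{S}$ denotes $S$ regarded as a forcing poset ordered by extension in the tree, together with the standard characterization that a two-step iteration $\mathbb{P} \ast \dot{\mathbb{Q}}$ is c.c.c.\ iff $\mathbb{P}$ is c.c.c.\ and $\mathbb{P} \Vdash \dot{\mathbb{Q}}$ is c.c.c. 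Indeed, an antichain of $S$ as a forcing poset is exactly an antichain of $S$ as a tree, and since a c.c.c.\ forcing preserves $\omega_1$ and the countability of the levels, $S$ remains an $\omega_1$-tree in the extension; invoking that $S$ may be taken normal, so that a cofinal branch would produce an uncountable antichain, the absence of an uncountable antichain is then equivalent to $S$ remaining Suslin. Thus $\mathbb{P} \Vdash S$ is Suslin iff $\mathbb{P} \times S$ is c.c.c.

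With this in hand, let $\langle \mathbb{P}_\alpha, \dot{\mathbb{Q}}_\alpha : \alpha < \delta \rangle$ be a finite support iteration in which each iterand is forced to be c.c.c.\ and to force $S$ Suslin. I would prove by induction on $\delta$ the single statement that $\mathbb{P}_\delta \times S$ is c.c.c.; this immediately yields that $\mathbb{P}_\delta$ is c.c.c.\ (as a factor, via the complete embedding $p \mapsto (p, 1_S)$) and, by the characterization above, that $\mathbb{P}_\delta$ forces $S$ Suslin. At a successor stage $\delta = \gamma + 1$, I would use the reassociation $\mathbb{P}_{\gamma+1} \times S = (\mathbb{P}_\gamma \ast \dot{\mathbb{Q}}_\gamma) \times S \cong \mathbb{P}_\gamma \ast (\dot{\mathbb{Q}}_\gamma \times \check{S})$. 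By the inductive hypothesis $\mathbb{P}_\gamma \times S$ is c.c.c., so $\mathbb{P}_\gamma$ forces $S$ Suslin; combined with the hypothesis that $\mathbb{P}_\gamma$ forces $\dot{\mathbb{Q}}_\gamma$ c.c.c., the characterization applied in $V^{\mathbb{P}_\gamma}$ gives that $\mathbb{P}_\gamma$ forces $\dot{\mathbb{Q}}_\gamma \times S$ to be c.c.c. Then $\mathbb{P}_\gamma \ast (\dot{\mathbb{Q}}_\gamma \times \check{S})$ is c.c.c.\ by the two-step characterization, so $\mathbb{P}_{\gamma+1} \times S$ is c.c.c.

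The main work, and the step I expect to be the chief obstacle, is the limit stage, where $\delta$ is a limit ordinal and I must deduce that $\mathbb{P}_\delta \times S$ is c.c.c.\ from the inductive hypothesis that $\mathbb{P}_\alpha \times S$ is c.c.c.\ for all $\alpha < \delta$. This is a $\Delta$-system argument carrying the extra coordinate $S$. Given a putative antichain $\{ (p_\xi, s_\xi) : \xi < \omega_1 \}$ of $\mathbb{P}_\delta \times S$, the supports $\supp(p_\xi)$ are finite subsets of $\delta$, so by the $\Delta$-system lemma I may pass to an uncountable set on which they form a $\Delta$-system with some finite root $r$. Since $r$ is finite and $\delta$ is a limit, I fix $\beta < \delta$ with $r \subseteq \beta$ and consider the restrictions $(p_\xi \res \beta, s_\xi)$, which lie in $\mathbb{P}_\beta \times S$. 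As this poset is c.c.c.\ by the inductive hypothesis, two of them, say for $\xi$ and $\eta$, have a common extension $(q, t)$ with $q \in \mathbb{P}_\beta$ and $t \leq_S s_\xi, s_\eta$. Because the portions of $\supp(p_\xi)$ and $\supp(p_\eta)$ lying in $[\beta, \delta)$ are disjoint, I can amalgamate $q$ with the parts of $p_\xi$ and $p_\eta$ above $\beta$ into a single condition $q^\ast \in \mathbb{P}_\delta$ lying below both $p_\xi$ and $p_\eta$; then $(q^\ast, t)$ witnesses that $(p_\xi, s_\xi)$ and $(p_\eta, s_\eta)$ are compatible, a contradiction.

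I note that this limit argument is uniform in the cofinality of $\delta$, since the root of the $\Delta$-system is always bounded below $\delta$. The delicate points to verify carefully are that $q^\ast$ is genuinely a condition below both $p_\xi$ and $p_\eta$ (using that the amalgamated coordinates above $\beta$ are copied verbatim, so the relevant forcing relations are trivial there, while $q$ already extends both restrictions below $\beta$) and that compatibility computed in $\mathbb{P}_\beta \times S$ transfers upward to $\mathbb{P}_\delta \times S$. Everything else is routine iteration bookkeeping, and the conceptual crux is the reformulation of Suslin tree preservation as c.c.c.-ness of the product $\mathbb{P} \times S$, which converts a statement about antichains of $S$ appearing at cofinally many stages into a single $\Delta$-system computation in the ground model.
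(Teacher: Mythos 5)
Your proof is correct, but it takes a genuinely different route from the paper's. The paper works directly with antichains of $S$: assuming some $p \in \p_\delta$ forces a name $\{\dot x_\alpha : \alpha < \omega_1\}$ to enumerate an uncountable antichain of $S$, it decides values $y_\alpha$ below conditions $p_\alpha \le p$, runs a $\Delta$-system argument on the supports to find $\beta < \delta$ and an uncountable $X$ so that compatibility of the restrictions $p_i \res \beta$ in $\p_\beta$ implies compatibility of $p_i, p_j$ in $\p_\delta$ (this is exactly your amalgamation step, invoked abstractly rather than verified), then uses the standard c.c.c.\ fact to find $u \le p \res \beta$ forcing that uncountably many of the $p_i \res \beta$ lie in the generic filter, and finally passes to $V[G]$ for $G \ni u$ to exhibit $\{ y_i : i \in Y \}$ as an uncountable antichain of $S$ there, contradicting the inductive hypothesis that $\p_\beta$ forces $S$ Suslin. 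You instead package the entire preservation statement as the single assertion that $\p_\delta \times S$ is c.c.c., which makes the successor step an exercise in associativity of two-step iterations and reduces the limit step to the classical Solovay--Tennenbaum $\Delta$-system/amalgamation computation with one extra coordinate --- no names, no passage to a generic extension, and no auxiliary fact about conditions entering the generic filter. Your product characterization is in fact the very tool the paper deploys later (in the proof of Theorem 3.4), so your argument is consonant with its methods; what the paper's version buys is that its reflect-to-$V^{\p_\beta}$ technique is the one that survives into the countable support/proper setting of Theorem 2.2, while yours buys a shorter, purely combinatorial limit case that, as you correctly observe, is uniform in the cofinality of $\delta$.

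One small caveat, which you share with the paper rather than fall behind it: your parenthetical that $S$ ``may be taken normal,'' used to upgrade ``no uncountable antichain of $S$ in the extension'' to ``$S$ is Suslin in the extension,'' is not literally licensed, since the theorem concerns the given tree $S$ and replacing it by a normal one changes the conclusion's subject. For a general Suslin tree one would prune to the subtree of points with uncountably many successors, which is ever-branching, to convert a new uncountable chain into a new uncountable antichain. But the paper's own proof likewise refutes only uncountable antichains, and its identification of ``the tree $S$ is Suslin'' with ``the forcing poset $S$ is c.c.c.''\ is stated in the context of normal trees, so this is a shared convention rather than a gap specific to your argument.
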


This theorem was known previously, but 
since we do not have a reference for a proof we provide a brief sketch.

\begin{proof}[Proof (Sketch)]
The result is immediate for iterations of length a successor ordinal, so 
let $\delta$ be a limit ordinal and suppose that we have a finite support forcing iteration 
$\langle \p_i, \dot \q_j : i \le \delta, \ j < \delta \rangle$ 
of c.c.c.\ forcings such that for all $i < \delta$, $\p_i$ forces that $S$ is Suslin. 
Then $\p_\delta$ is c.c.c. 
Let $p \in \p_\delta$ and assume for a contradiction that 
$$
p \Vdash_{\delta} \text{``$\{ \dot x_\alpha : \alpha < \omega_1 \}$ 
is an uncountable antichain of $S$.''}
$$
Then for each $\alpha$ we can choose $p_\alpha \le p$ in $\p_\delta$ and 
$y_\alpha \in S$ such that 
$p_\alpha \Vdash_{\p_\delta} \text{``$\dot x_\alpha = y_\alpha$.''}$ 
By a standard $\Delta$-system argument on the domains of the $p_\alpha$'s, 
find $\beta < \delta$ 
and an uncountable set $X \subseteq \omega_1$ such that for all $i < j$ in $X$, 
if $p_i \res \beta$ and $p_j \res \beta$ are compatible in $\p_\beta$, then 
$p_i$ and $p_j$ are compatible in $\p_\delta$. 

For all $i \in X$, $p_i \res \beta \le p \res \beta$ in $\p_\beta$. 
By a standard fact about c.c.c.\ forcings, there exists $u \le p \res \beta$ 
in $\p_\beta$ such that 
$$
u \Vdash_\beta 
\text{``$\{ i \in X : p_i \res \beta \in \dot G_{\p_\beta} \}$ is uncountable.''}
$$
Let $G$ be a generic filter on $\p_\beta$ which contains $u$. 
Let $Y := \{ i \in X : p_i \res \beta \in G \}$, which is uncountable. 
Then for all $i < j$ in $Y$, $p_i \res \beta$ and $p_j \res \beta$ are in $G$ and hence are 
compatible in $\p_\beta$. 
As $Y \subseteq X$, $p_i$ and $p_j$ are compatible in $\p_\delta$, which in turn 
easily implies that $y_i$ and $y_j$ are incomparable in $S$. 
Hence, in $V[G]$ the set $\{ y_i : i \in Y \}$ is an uncountable antichain of $S$, 
which contradicts our assumption that $S$ is Suslin in $V^{\p_\beta}$.
\end{proof}

We sometimes consider a normal tree $S$ as a forcing poset with the reversed order, 
which we also write as $S$. 
Then elements $a$ and $b$ of $S$ are compatible in this forcing poset iff they 
are comparable in the tree $S$. 
So an antichain of the tree $S$ is the same as an antichain of the forcing poset $S$. 
Hence, the tree $S$ is Suslin iff the forcing poset $S$ is c.c.c.
If $S$ is Suslin, then 
forcing with $S$ yields a generic filter which is a cofinal branch of $S$. 

Suppose that $S$ is a Suslin tree. 
Then for any dense open set $D \subseteq S$ (in the forcing poset), 
there exists some $\alpha < \omega_1$ such that $S_\alpha \subseteq D$. 
Namely, pick a maximal antichain $A \subseteq D$. 
Since $A$ is countable, we can fix $\alpha < \omega_1$ such that 
$A \subseteq S \res \alpha$. 
As $A$ is maximal and $D$ is open, 
it easily follows that $S_\alpha \subseteq D$.

Given finitely many $\omega_1$-trees $T_0,\ldots,T_{n-1}$, the product 
$T_0 \times \cdots \times T_{n-1}$ ordered componentwise by $<_T$ is a strict partial order. 
The suborder $T_0 \otimes \cdots \otimes T_{n-1}$ 
consists of all $n$-tuples in the product whose elements 
all have the same height. 
Since this suborder is dense in the product assuming the trees are normal, 
the suborder is c.c.c.\ iff the product is c.c.c. 
In particular, for $n > 1$, the tree $T_0 \otimes \cdots \otimes T_{n-1}$ is Suslin iff 
the tree $T_0 \otimes \cdots \otimes T_{n-2}$ is Suslin and 
$\Vdash_{T_0 \otimes \cdots \otimes T_{n-2}} \text{``$T_{n-1}$ is Suslin.''}$ 
This follows from the basic fact about c.c.c.\ forcings that $\p \times \q$ is c.c.c.\ 
iff $\p$ is c.c.c.\ and $\Vdash_\p \text{``$\q$ is c.c.c..''}$ 
Note that the height of a tuple in the tree $T_0 \otimes \cdots \otimes T_{n-1}$ is 
equal to the height of the elements of that tuple in the trees $T_0,\ldots,T_{n-1}$. 
It follows that $T_0 \otimes \cdots \otimes T_{n-2} \otimes T_{n-1}$ is isomorphic to 
$(T_0 \otimes \cdots \otimes T_{n-2}) \otimes T_{n-1}$.

Let $T$ be an $\omega_1$-tree. 
For every $a \in T$, define $T_a$ as the subtree consisting of all $b \in T$ 
such that either $b \le_T a$ or $a \le_T b$. 
For any positive integer $n$ and $n$-tuple 
$\vec a = (a_0,\ldots,a_{n-1})$ of elements of $T$ of the same height, define 
$T_{\vec a} := T_{a_0} \otimes \cdots \otimes T_{a_{n-1}}$, which is called a 
\emph{derived tree of $T$ of dimension $n$}. 
The tree $T$ is said to be \emph{$n$-free} if all of its derived trees of dimension $n$ are Suslin, 
and is \emph{free} if it is $n$-free for all positive integers $n$. 
Jensen \cite{DEVLINJ} proved that $\Diamond$ implies the existence of a free tree. 

In the remainder of this section we prove some easy facts about 
products of trees which will be helpful to refer to later.

\begin{lemma}
	Let $T$ be an $\omega_1$-tree. 
	Then for any derived tree $U = T_{a_0} \otimes \cdots \otimes T_{a_{n-1}}$ of $T$, 
	$U$ is Aronszajn iff for some $i < n$, $T_{a_i}$ is Aronszajn.
\end{lemma}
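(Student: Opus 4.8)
The plan is to reduce the question of Aronszajn-ness entirely to the existence of a cofinal branch, after first checking that $U$ and each of the cones $T_{a_i}$ are $\omega_1$-trees. Recall that an Aronszajn tree is precisely an $\omega_1$-tree with no cofinal branch, so once I know that every tree in sight is an $\omega_1$-tree, being Aronszajn becomes equivalent to having no cofinal branch, and the lemma follows from a single observation about how branches of a product relate to branches of its factors.

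First I would dispose of the $\omega_1$-tree bookkeeping. Each $T_{a_i}$ has countable levels, since $(T_{a_i})_\gamma \subseteq T_\gamma$, and it has height $\omega_1$ because $a_i$ has extensions in $T$ at every height below $\omega_1$; hence each $T_{a_i}$ is an $\omega_1$-tree. Likewise $U$ has countable levels, since $U_\gamma = \prod_{i<n} (T_{a_i})_\gamma$ is a finite product of countable sets, and this product is nonempty for every $\gamma < \omega_1$ because each factor is. Thus $U$ is an $\omega_1$-tree as well.

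The heart of the argument is the branch correspondence: $U$ has a cofinal branch if and only if every $T_{a_i}$ has a cofinal branch. For the forward direction, given a cofinal branch $B$ of $U$, I would project to the $i$-th coordinate, setting $B_i := \{ b_i : (b_0,\ldots,b_{n-1}) \in B \}$; since $B(\gamma)$ has all of its coordinates at height $\gamma$ and $B$ runs through all of $\omega_1$, each $B_i$ is a cofinal chain of $T_{a_i}$. Conversely, given cofinal branches $B_0,\ldots,B_{n-1}$ of $T_{a_0},\ldots,T_{a_{n-1}}$, I would form $B := \{ (B_0(\gamma),\ldots,B_{n-1}(\gamma)) : \gamma < \omega_1 \}$; because each $B_i$ contains exactly one node of each height below $\omega_1$, this is a well-defined cofinal chain of $U$. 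Taking contrapositives, $U$ has no cofinal branch if and only if some $T_{a_i}$ has no cofinal branch.

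Combining the two parts finishes the proof: as $U$ and all the $T_{a_i}$ are $\omega_1$-trees, $U$ is Aronszajn iff $U$ has no cofinal branch iff some $T_{a_i}$ has no cofinal branch iff some $T_{a_i}$ is Aronszajn. The branch manipulations are routine once the levels are matched up by height, so the only point demanding care is the $\omega_1$-tree verification in the second step, specifically that each cone $T_{a_i}$ — and hence each level of $U$ — reaches all heights below $\omega_1$. This is exactly the extension property of normal trees, and it is genuinely needed: for a general $\omega_1$-tree a cone above a node could have bounded height, in which case $U$ would fail to be an $\omega_1$-tree even though some $T_{a_i}$ is Aronszajn, and the equivalence would break down.
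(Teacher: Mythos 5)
Your proof is correct and follows essentially the same route as the paper's: the paper's entire argument is exactly your branch correspondence, projecting a cofinal branch of $U$ to cofinal branches $B_i$ of the factors and, conversely, assembling $(B_0(\alpha),\ldots,B_{n-1}(\alpha))$ into a cofinal branch of $U$. The only difference is that you make explicit the $\omega_1$-tree bookkeeping (that each cone $T_{a_i}$, and hence $U$, reaches every height, which uses the normality/extension property left implicit in the paper's statement), and your closing observation that this hypothesis is genuinely needed for a non-normal $T$ is accurate.
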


\begin{proof}
	Suppose that $U$ is not Aronszajn and let $B$ be a cofinal branch of $U$. 
	For each $\alpha < \omega_1$, 
	write $B(\alpha) = (B_0(\alpha),\cdots,B_{n-1}(\alpha))$. 
	Then for each $i < n$, $B_i$ is a cofinal branch of $T_{a_i}$, so 
	$T_{a_i}$ is not Aronszajn.
	Conversely, suppose that for each $i < n$, $B_i$ is a cofinal branch of $T_{a_i}$. 
	Then $B$ defined by $B(\alpha) := (B_0(\alpha),\ldots,B_{n-1}(\alpha))$ is a cofinal 
	branch of $U$.		
\end{proof}

\begin{lemma}
	Suppose that $S$ and $T$ are Suslin trees and $S \otimes T$ is special. 
	Then $\Vdash_{S} \text{``$T$ is special.''}$
\end{lemma}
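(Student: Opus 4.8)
The plan is to produce, after forcing with $S$, an explicit specializing function on $T$ by restricting the given specializing function on $S \otimes T$ to the ``slice'' determined by the generic branch. Fix a specializing function $f : S \otimes T \to \omega$. Let $G$ be generic for $S$ (viewed as a forcing poset); since $S$ is Suslin, by the observation in the preliminaries $G$ is a cofinal branch $b$ of $S$, so for every $\alpha < \omega_1$ the node $b(\alpha) \in S_\alpha$ is defined. The point is that the map $t \mapsto (b(\h_T(t)), t)$ sends each $t \in T$ to a legitimate element of $S \otimes T$ (the two coordinates share the common height $\h_T(t)$) and is strictly increasing, so composing with $f$ should specialize $T$.

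Concretely, working in $V[b]$ I would define $g : T \to \omega$ by $g(t) := f(b(\h_T(t)), t)$. This is a total function lying in $V[b]$: it uses only $b \in V[b]$ together with $f, T \in V$, and it is total because $b$ is cofinal. To check that $g$ specializes $T$, suppose $t <_T t'$ and put $\alpha := \h_T(t) < \beta := \h_T(t')$, $s := b(\alpha)$, and $s' := b(\beta)$. Since $b$ is a branch of $S$, we have $s <_S s'$; together with $t <_T t'$ this gives $(s,t) <_{S \otimes T} (s',t')$, and therefore $f(s,t) \ne f(s',t')$, i.e.\ $g(t) \ne g(t')$.

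Finally, because $S$ is c.c.c.\ it preserves $\omega_1$, so $g : T \to \omega$ genuinely witnesses in $V[b]$ that $T$ is special. As the generic $b$ was arbitrary, this yields $\Vdash_{S} \text{``$T$ is special.''}$ I do not expect a serious obstacle here: the whole argument turns on the single observation that a generic branch through $S$ converts the two-dimensional object $S \otimes T$ into a faithful copy of $T$. The only point requiring care is the verification that both coordinates strictly increase in the product, and this is precisely where the Suslinity of $S$ is used, since it is what guarantees that forcing with $S$ supplies a \emph{cofinal} branch $b$ and hence a well-defined node $b(\alpha)$ at every level $\alpha < \omega_1$.
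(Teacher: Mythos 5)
Your proposal is correct and is essentially identical to the paper's proof: both define $g(t) := f(B(\h_T(t)),t)$ from the generic branch $B$ and verify that $t <_T t'$ yields a strict inequality in $S \otimes T$, whence $g$ specializes $T$. Your additional remarks on why the generic filter is a cofinal branch and why $\omega_1$ is preserved are accurate but routine, so there is nothing to correct.
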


\begin{proof}
	Let $f : S \otimes T \to \omega$ be a specializing function. 
	In $V^S$, let $B$ be a cofinal branch of $S$, and we define a 
	specializing function $g : T \to \omega$. 
	For any $x \in T$, define $g(x) := f(B(\h_T(x)),x)$. 
	If $x <_T y$, then $(B(\h_T(x)),x) <_{S \otimes T} (B(\h_T(y)),y)$, 
	so $g(x) = f(B(\h_T(x)),x)) \ne f(B(\h_T(y)),y) = g(y)$.
\end{proof}

\begin{lemma}
	Let $S$ be an $\omega_1$-tree and $b_0,\ldots,b_{n-1}$ distinct elements of $T$ 
	of the same height $\alpha$. 
	Suppose that $\alpha < \xi < \omega_1$. 
	If for all $c_0,\ldots,c_{n-1}$ above $b_0,\ldots,b_{n-1}$ respectively 
	of heights $\xi$, $S_{c_0} \otimes \cdots \otimes S_{c_{n-1}}$ is Suslin, 
	then $S_{b_0} \otimes \cdots \otimes S_{b_{n-1}}$ is Suslin.
\end{lemma}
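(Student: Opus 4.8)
The plan is to show that any uncountable chain or uncountable antichain of $U := S_{b_0} \otimes \cdots \otimes S_{b_{n-1}}$ can be lifted above level $\xi$ into one of the derived trees $S_{c_0} \otimes \cdots \otimes S_{c_{n-1}}$ that the hypothesis guarantees is Suslin, yielding a contradiction. The basic observation driving everything is this: if $\vec x = (x_0,\ldots,x_{n-1}) \in U$ has height at least $\xi$, then each $x_i$ is comparable with $b_i$ and of greater height, so $b_i \leq_S x_i$; hence $c_i := x_i \res \xi$ satisfies $b_i = x_i \res \alpha \leq_S c_i$ and $\h_S(c_i) = \xi$, and $\vec x$ lies in $S_{c_0} \otimes \cdots \otimes S_{c_{n-1}}$ above the node $(c_0,\ldots,c_{n-1})$. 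Thus $(c_0,\ldots,c_{n-1})$ is exactly one of the tuples named in the hypothesis.

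First I would dispose of uncountable chains. Let $B$ be an uncountable chain of $U$ and fix any $\vec x \in B$ of height at least $\xi$; by the observation each coordinate satisfies $b_i \leq_S x_i$, so $c_i := x_i \res \xi$ lies above $b_i$ and has height $\xi$. The tail $\{ \vec y \in B : \vec x \leq \vec y \}$ is uncountable, since it omits only the countably many elements of $B$ of height at most that of $\vec x$, and it is contained in $S_{c_0} \otimes \cdots \otimes S_{c_{n-1}}$, where it forms an uncountable chain. This contradicts that the latter tree is Suslin.

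The main step is the antichain case. Suppose $A$ is an uncountable antichain of $U$. Discarding the countably many elements of height below $\xi$ (each level of a derived tree of $S$ is countable), I may assume every $\vec x \in A$ has height at least $\xi$, so that $\vec x \res \xi := (x_0 \res \xi, \ldots, x_{n-1} \res \xi)$ is a well-defined tuple of elements lying above the respective $b_i$ at height $\xi$. Since $S$ has countable levels, there are only countably many such tuples at level $\xi$, so by pigeonhole there is a single tuple $\vec c = (c_0,\ldots,c_{n-1})$ and an uncountable $A' \subseteq A$ with $\vec x \res \xi = \vec c$ for every $\vec x \in A'$. Each such $\vec x$ then satisfies $c_i \leq_S x_i$, so $A' \subseteq S_{c_0} \otimes \cdots \otimes S_{c_{n-1}}$; and because comparability of tuples is computed componentwise by $<_S$ in both derived trees, $A'$ is still pairwise incomparable there, hence an uncountable antichain. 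This contradicts that $S_{c_0} \otimes \cdots \otimes S_{c_{n-1}}$ is Suslin, completing the proof.

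The only real work is the bookkeeping around the reduction: one must check that $c_i \geq_S b_i$ (immediate from $b_i = x_i \res \alpha \leq_S x_i \res \xi = c_i$ since $\alpha < \xi$), that $\vec c$ genuinely indexes a tree covered by the hypothesis, and that restricting an antichain of the $\vec b$-tree to a subset lying above the common node $\vec c$ preserves incomparability in the $\vec c$-tree. I expect no serious difficulty here, since the product order is componentwise and the two derived trees agree on the comparability of any tuples above $\vec c$; the one genuinely essential ingredient is that level $\xi$ of $S$ is countable, which is what makes the pigeonhole step go through.
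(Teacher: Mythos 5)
Your proposal is correct and takes essentially the same route as the paper: both proofs use the countability of level $\xi$ of $S$ to pigeonhole an uncountable antichain onto a single tuple $(c_0,\ldots,c_{n-1})$ above $(b_0,\ldots,b_{n-1})$ and then observe that incomparability, being computed componentwise, transfers to $S_{c_0} \otimes \cdots \otimes S_{c_{n-1}}$. The only difference is that you also explicitly handle uncountable chains (which the paper's proof leaves implicit), and your verification of the bookkeeping details is sound.
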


\begin{proof}
	Suppose that $\{ (d_{i,0},\ldots,d_{i,n-1}) : i < \omega_1 \}$ 
	is an uncountable antichain of 
	$S_{b_0} \otimes \cdots \otimes S_{b_{n-1}}$. 
	Since level $\xi$ of $S$ is countable, we can find an uncountable 
	set $X \subseteq \omega_1$ and some $c_0,\ldots,c_{n-1}$ 
	of height $\xi$ such that for all $i \in X$, 
	$d_{i,0} \res \xi = c_0, \ldots, d_{i,n-1} \res \xi = c_{n-1}$. 
	Then $\{ (d_{i,0},\ldots,d_{i,n-1}) : i \in X \}$ is an uncountable 
	antichain of $S_{c_0} \otimes \cdots \otimes S_{c_{n-1}}$.
\end{proof}

\begin{lemma}
	Let $T$ be an $\omega_1$-tree and $U = T_{a_0} \otimes \cdots \otimes T_{a_{n-1}}$ 
	a derived tree of $T$. 
	Suppose that $m \le n$, $i_0 < \cdots < i_{m-1} \le n-1$, and 
	$W := T_{a_{i_0}} \otimes \cdots \otimes T_{a_{i_{m-1}}}$ is special. 
	Then $U$ is special.
\end{lemma}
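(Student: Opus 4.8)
The plan is to exploit the coordinate projection from $U$ onto the subproduct $W$ and pull a specializing function back along it. Every tuple in $U = T_{a_0} \otimes \cdots \otimes T_{a_{n-1}}$ consists of elements of $T$ of a common height, and for each $j < n$ the $j$-th coordinate of such a tuple lies in $T_{a_j}$. I would therefore define
$\pi : U \to W$ by $\pi(b_0,\ldots,b_{n-1}) := (b_{i_0},\ldots,b_{i_{m-1}})$. The retained coordinates $b_{i_0},\ldots,b_{i_{m-1}}$ all share the height $\alpha := \h_T(b_0) = \cdots = \h_T(b_{n-1})$ and lie in $T_{a_{i_0}},\ldots,T_{a_{i_{m-1}}}$ respectively, so $\pi(b_0,\ldots,b_{n-1})$ is genuinely an element of $W$ of height $\alpha$. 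Thus $\pi$ is well-defined and height preserving.

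The key observation is that $\pi$ is strictly increasing. Recall that in the tensor suborder the order is componentwise, so $(b_0,\ldots,b_{n-1}) <_U (c_0,\ldots,c_{n-1})$ holds precisely when $b_j <_T c_j$ for every $j < n$ (this is forced by the fact that a tuple of a given height has a unique predecessor at each lower level). In that case $b_{i_k} <_T c_{i_k}$ for each $k < m$ as well, which is exactly what it means for $\pi(b_0,\ldots,b_{n-1}) <_W \pi(c_0,\ldots,c_{n-1})$. Hence $\pi$ preserves the strict tree order.

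Given the hypothesized specializing function $f : W \to \omega$, I would then set $g := f \circ \pi$, that is, $g(u) := f(\pi(u))$ for $u \in U$. If $u <_U u'$, then $\pi(u) <_W \pi(u')$ by the previous paragraph, and since $f$ specializes $W$ we get $g(u) = f(\pi(u)) \ne f(\pi(u')) = g(u')$. Therefore $g$ is a specializing function on $U$, so $U$ is special.

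I do not expect a genuine obstacle: the whole content is that forgetting coordinates is an order-preserving map between the two tensor products, so any coloring witnessing speciality of $W$ lifts verbatim to $U$. The only points requiring (routine) care are checking that $\pi$ lands inside $W$, which uses that the retained coordinates have a common height, and that a strict inequality in $U$ yields strict inequality in every coordinate, hence in the selected ones.
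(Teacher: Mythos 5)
Your proof is correct and is essentially the paper's own argument: the paper likewise defines $g(c_0,\ldots,c_{n-1}) := f(c_{i_0},\ldots,c_{i_{m-1}})$, i.e.\ pulls the specializing function back along the coordinate projection, and notes it specializes $U$. Your write-up merely makes explicit the routine verifications (that the projection lands in $W$, preserves height, and is strictly increasing) which the paper leaves to the reader.
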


\begin{proof}
	Let $f : W \to \omega$ be a specializing function. 
	Define $g : U \to \omega$ by letting 
	$g(c_0,\ldots,c_{n-1}) := f(c_{i_0},\ldots,c_{i_{m-1}})$. 
	Then easily $g$ is a specializing function for $U$, so $U$ is special. 
\end{proof}

\section{Preserving a Suslin Tree}

In this section we discuss the topic of forcing a property of some Aronszajn 
tree while preserving another tree being Suslin. 
Specifically, we consider forcings to make an Aronszajn tree special or to make 
two Aronszajn trees club isomorphic. 
This is not always possible; if there exists a strictly increasing 
function from a Suslin tree $S$ into an Aronszajn tree $T$, then specializing $T$ 
also specializes $S$.

The relation between a Suslin tree $S$ and an Aronszajn tree $T$ which we 
are interested in is whether adding a cofinal branch to $S$ also adds a cofinal 
branch to $T$. 
We use the following characterization of this relation.

\begin{proposition}[Lindstr\"{o}m \cite{LINDSTROM}]
	Let $S$ be a normal Suslin tree and $T$ a normal Aronszajn tree. 
	Then $\Vdash_S \text{``$T$ has a cofinal branch''}$ iff 
	there exists a club $C \subseteq \omega_1$ and a strictly increasing and height 
	preserving function $f : S \res C \to T \res C$.
\end{proposition}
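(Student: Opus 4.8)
The plan is to prove the two implications separately, with the left-to-right direction as the substantial one. First I would dispatch the easy converse. Suppose we are given a club $C$ and a strictly increasing, height preserving map $f : S \res C \to T \res C$. Forcing with $S$ in the reversed order adds a cofinal branch $B$ of $S$, since $S$ is Suslin. Then $\{ B(\gamma) : \gamma \in C \}$ is a chain in $S \res C$ cofinal in $S$, so its image $\{ f(B(\gamma)) : \gamma \in C \}$ is, by strict increasingness, a chain in $T$, and by height preservation its elements have heights ranging over the unbounded set $C$. Hence its downward closure is a cofinal branch of $T$, giving $\Vdash_S \text{``$T$ has a cofinal branch.''}$

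For the forward direction, assume $\Vdash_S \text{``$T$ has a cofinal branch''}$ and fix an $S$-name $\dot b$ forced to be such a branch, writing $\dot b(\alpha)$ for its element of height $\alpha$. For each $\alpha < \omega_1$ the set $D_\alpha$ of conditions deciding $\dot b(\alpha)$ is dense open in the forcing poset $S$, so by the fact recorded above for Suslin trees there is a level $g(\alpha) \ge \alpha$ with $S_{g(\alpha)} \subseteq D_\alpha$; I would arrange $g$ to be increasing. Let $C$ be the club of limit ordinals $\gamma$ closed under $g$, that is, with $g(\alpha) < \gamma$ for all $\alpha < \gamma$. The map $f$ will be defined on $S \res C$.

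The heart of the argument is the construction of $f$. Fix $\gamma \in C$ and $s \in S_\gamma$. For each $\alpha < \gamma$ we have $g(\alpha) < \gamma$, so $s \res g(\alpha)$ lies in $S_{g(\alpha)} \subseteq D_\alpha$ and decides $\dot b(\alpha)$; since $s$ extends $s \res g(\alpha)$ in the tree, $s$ is a stronger condition and decides $\dot b(\alpha)$ the same way, say as $t^s_\alpha \in T_\alpha$. As $\dot b$ is forced to be a branch, $\{ t^s_\alpha : \alpha < \gamma \}$ is a $<_T$-increasing chain of order type $\gamma$. To get a value at level $\gamma$, pick an extension $s^*$ of $s$ of height $g(\gamma)$ (possible by normality of $S$); then $s^* \in S_{g(\gamma)} \subseteq D_\gamma$ decides $\dot b(\gamma)$ as some $t \in T_\gamma$, which, being forced above each $t^s_\alpha$, is an upper bound of the chain. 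I would set $f(s) := t$. This is well defined: since $\gamma$ is a limit, normality of $T$ guarantees that the chain has at most one upper bound, so $t$ does not depend on the choice of $s^*$, and $f$ is height preserving by construction. For strict increasingness, if $s <_S s'$ with heights $\gamma < \gamma'$ in $C$, then $s'$ decides each $\dot b(\alpha)$ for $\alpha < \gamma'$ compatibly with $s$, so $t^{s'}_\alpha = t^s_\alpha$ for $\alpha < \gamma$ and $t^{s'}_\gamma$ is the unique upper bound of $\{ t^s_\alpha : \alpha < \gamma \}$, whence $t^{s'}_\gamma = f(s)$; as $f(s') >_T t^{s'}_\gamma$, we conclude $f(s) <_T f(s')$.

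The main obstacle is precisely the well-definedness of $f$ at limit levels, namely producing a \emph{unique} element of $T_\gamma$ above the chain $\{ t^s_\alpha : \alpha < \gamma \}$. Existence flows from the forcing hypothesis together with the density of conditions deciding $\dot b(\gamma)$ and the level-covering fact for Suslin trees, while uniqueness is exactly the clause in the definition of normality that a chain of limit order type has at most one upper bound; this same uniqueness is what makes the coherence computation for strict increasingness go through. Once this is in place, everything else is routine bookkeeping, and $f : S \res C \to T \res C$ is the desired map.
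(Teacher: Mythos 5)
Your proof is correct. The paper does not actually prove this proposition --- it is stated with a citation to Lindstr\"{o}m and used as a black box --- so there is no internal argument to compare against; your proof is the standard one and uses exactly the tools the paper sets up in its preliminaries: the fact that every dense open subset of a Suslin forcing contains a full level $S_\alpha$ (and hence all higher levels, which justifies taking $g$ increasing), and the normality clause that a chain of limit order type has at most one upper bound, which is precisely what makes $f$ well defined at each $\gamma \in C$ and makes the coherence computation $t^{s'}_\gamma = f(s)$ go through. The one step worth stating explicitly is the mixing/maximality principle giving a single name $\dot b$ with $\Vdash_S \text{``$\dot b$ is a cofinal branch of $T$''}$ from the hypothesis $\Vdash_S \text{``$T$ has a cofinal branch''}$, but this is routine.
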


In particular, for all $x \in S$, 
$x \Vdash_S \text{``$T$ has a cofinal branch''}$ iff 
there exists a club $C \subseteq \omega_1$ and a strictly increasing and height 
preserving function $f : S_x \res C \to T \res C$. 
It is easy to check that in this case the range $f[S_x \res C]$ has no uncountable 
antichain, and hence $T$ contains a Suslin subtree.

In Section 4.1 of \cite{AS2}, Abraham-Shelah 
proved that if $S$ is a Suslin tree and $T$ is an Aronszajn tree, and 
$S$ forces that $T$ is Aronszajn, then there is a forcing poset $\p$ which specializes 
$T$ while preserving $S$ being Suslin. 
The forcing $\p$ consists of countably infinite conditions and does not 
add new countable sets of ordinals. 
It is natural to ask whether the same property is true 
for Baumgartner's c.c.c.\ forcing for 
making $T$ special using finite conditions.

\begin{definition}[Baumgartner \cite{BDISS}]
	Let $T$ be a tree of height $\omega_1$. 
	Define $\q(T)$ to be the forcing poset whose conditions 
	are finite functions $p : \dom(p) \subseteq T \to \omega$ such that 
	$x <_T y$ in $\dom(p)$ implies $p(x) \ne p(y)$, ordered by reverse inclusion.
\end{definition}

\begin{thm}
	Let $T$ be a tree of height $\omega_1$. 
	Then $T$ has no cofinal branch iff $\q(T)$ is c.c.c.
\end{thm}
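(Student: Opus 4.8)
The plan is to prove both directions by contraposition, disposing of the easy implication first. For the direction that a cofinal branch prevents $\q(T)$ from being c.c.c., suppose $B$ is a cofinal branch of $T$ and for each $\alpha < \omega_1$ put $p_\alpha := \{(B(\alpha),0)\}$, which is a condition of $\q(T)$. If $\alpha < \beta$ then $B(\alpha) <_T B(\beta)$, so any common extension of $p_\alpha$ and $p_\beta$ would be a condition whose domain contains two comparable elements both receiving the value $0$, which is impossible. Hence $\{ p_\alpha : \alpha < \omega_1 \}$ is an uncountable antichain and $\q(T)$ is not c.c.c.

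For the converse, suppose $T$ has no cofinal branch and let $\{ p_\alpha : \alpha < \omega_1 \}$ be conditions of $\q(T)$; I want to find $\alpha \ne \beta$ with $p_\alpha$ and $p_\beta$ compatible. First I would thin out using a $\Delta$-system argument: since each $\dom(p_\alpha)$ is finite, refine to an uncountable set on which the domains form a $\Delta$-system with root $r$. As $r$ is finite, there are only countably many functions $r \to \omega$, so refine further so that every $p_\alpha$ agrees with a fixed $s$ on $r$. We may also assume that each petal $a_\alpha := \dom(p_\alpha) \setminus r$ is nonempty, since if uncountably many petals were empty the corresponding conditions would all equal $s$ and hence be pairwise compatible. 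The petals $\{ a_\alpha \}$ are then pairwise disjoint nonempty finite subsets of $T$.

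The crux is to show that it suffices to find $\alpha \ne \beta$ for which every element of $a_\alpha$ is incomparable in $T$ with every element of $a_\beta$. Indeed, suppose this holds and consider $p_\alpha \cup p_\beta$. Because $\dom(p_\alpha) \cap \dom(p_\beta) = r$ and both conditions agree with $s$ on $r$, the restriction of $p_\alpha \cup p_\beta$ to $\dom(p_\alpha)$ is exactly $p_\alpha$, and likewise for $p_\beta$; so no clash can arise from two comparable elements lying both in $\dom(p_\alpha)$ or both in $\dom(p_\beta)$. The only other comparable pairs in $\dom(p_\alpha \cup p_\beta)$ would have one element in $a_\alpha$ and one in $a_\beta$, and by hypothesis there are none. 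Hence $p_\alpha \cup p_\beta$ is a condition, so $p_\alpha$ and $p_\beta$ are compatible. To produce such a pair I would apply Baumgartner's combinatorial theorem quoted above to the family $\{ a_\alpha : \alpha < \omega_1 \}$ of pairwise disjoint finite sets, obtaining $\alpha < \beta$ with $a_\alpha$ and $a_\beta$ elementwise incomparable, which completes the argument.

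The main obstacle I anticipate is precisely this last appeal. Baumgartner's combinatorial theorem is stated for Aronszajn trees, whereas $T$ is here only assumed to be a tree of height $\omega_1$ with no cofinal branch and so may have uncountable levels. I would therefore need to check that the combinatorial dichotomy still holds under the weaker hypothesis of having no uncountable chain — which is equivalent to having no cofinal branch, since height exactly $\omega_1$ forces any uncountable chain to be cofinal — or else reduce to the Aronszajn case. By contrast, the $\Delta$-system bookkeeping and the verification that elementwise incomparability of the petals makes $p_\alpha \cup p_\beta$ a condition are routine.
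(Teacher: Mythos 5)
Your easy direction coincides exactly with the paper's: the conditions $p_\alpha = \{(B(\alpha),0)\}$ form an uncountable antichain, since any common extension of $p_\alpha$ and $p_\beta$ would assign the same value $0$ to the comparable pair $B(\alpha) <_T B(\beta)$. For the forward direction the paper gives no argument at all, only a citation to Chapter 4 of \cite{BDISS}, so your attempt is more ambitious than the paper's text. Within your attempt, the $\Delta$-system bookkeeping and the verification that elementwise incomparable petals make $p_\alpha \cup p_\beta$ a condition are correct and complete: every comparable pair in $\dom(p_\alpha \cup p_\beta)$ either lies inside a single one of the two conditions (this covers pairs involving the root $r$) or crosses the two petals, and the latter are excluded by hypothesis.

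The genuine gap is the one you flagged and did not close. Theorem 2.1 is stated for Aronszajn trees, i.e., $\omega_1$-trees with countable levels, whereas the present $T$ is an arbitrary tree of height $\omega_1$ with no cofinal branch: it may have uncountable levels and cardinality greater than $\aleph_1$. Your fallback of ``reducing to the Aronszajn case'' does not obviously succeed: restricting to the downward closure of $\bigcup_\alpha \dom(p_\alpha)$ yields a subtree of size $\aleph_1$ with no uncountable chain, but its levels can still be uncountable (for instance, all petals could sit on a single uncountable level, or there could be uncountably many minimal nodes), so it need not be an $\omega_1$-tree. Moreover, the dichotomy you appeal to is essentially equivalent to the statement being proved: if a family of pairwise disjoint finite sets $a_\alpha$ had every pair linked by comparability, then the conditions with domain $a_\alpha$ and constant value $0$ would already be an uncountable antichain of $\q(T)$. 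So, modulo routine bookkeeping, your argument reduces the hard direction to precisely the combinatorial core that remains unproved. That core is in fact a ZFC theorem for every tree with no uncountable chain --- it is what Chapter 4 of \cite{BDISS} establishes, and a version appears in Baumgartner--Malitz--Reinhardt's work on embedding trees in the rationals, proved by a ramification-style argument rather than the elementary-submodel/height-separation argument available for countable levels --- but it is not a formal consequence of the Aronszajn-tree version quoted in the paper. Your observation that ``no cofinal branch'' is equivalent to ``no uncountable chain'' for trees of height $\omega_1$ is correct (the downward closure of an uncountable chain is a chain meeting every level, and so extends to a cofinal branch), but it only reformulates the hypothesis; it does not supply the missing dichotomy.
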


\begin{proof}
	See Chapter 4 of \cite{BDISS} for the forward direction. 
	Conversely, suppose that $B$ is a cofinal branch of $T$. 
	For each $\alpha < \omega_1$ define $p_\alpha := \{ (B(\alpha),0) \}$. 
	Then $\{ p_\alpha : \alpha < \omega_1 \}$ is an uncountable 
	antichain of $\q(T)$.
\end{proof}

So assuming that $T$ is an Aronszajn tree, $\q(T)$ is c.c.c.\ and forces 
that $T$ is special.

\begin{thm}
	Let $S$ be a Suslin tree and $T$ an Aronszajn tree. 
	Then 
	$$
	\Vdash_{S} \text{``$T$ is Aronszajn''} \ \Longleftrightarrow \ 
	\Vdash_{\q(T)} \text{``$S$ is Suslin.''}
	$$
\end{thm}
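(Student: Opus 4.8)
The plan is to route both sides of the biconditional through the single product poset $S \times \q(T)$, using the basic fact recorded earlier that $\p \times \q$ is c.c.c.\ iff $\p$ is c.c.c.\ and $\Vdash_\p$ ``$\q$ is c.c.c.'', read off in both of its factorizations. The two substantive inputs are Baumgartner's theorem that $\q(T)$ is c.c.c.\ iff $T$ has no cofinal branch (to be applied both in $V$ and inside $V^S$) and the equivalence that a normal $\omega_1$-tree is Suslin iff it is c.c.c.\ as a forcing poset. Since $\q(T)$ is defined from $T \in V$ and its conditions are finite, hence absolute, $\q(T)$ is a ground model poset and the two-step iterations $S * \check{\q(T)}$ and $\q(T) * \check S$ are literally the product $S \times \q(T)$.

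First I would record the preservation facts. As $S$ is Suslin it is c.c.c.\ as a poset, and as $T$ is Aronszajn, $\q(T)$ is c.c.c.; c.c.c.\ forcing preserves $\omega_1$ and countable levels and adds no new nodes, so $T$ remains an $\omega_1$-tree in $V^S$ and $S$ remains a normal $\omega_1$-tree in $V^{\q(T)}$. Consequently, in $V^S$ the statements ``$T$ is Aronszajn'' and ``$T$ has no cofinal branch'' coincide, and in $V^{\q(T)}$ the statements ``$S$ is Suslin'' and ``$S$ is c.c.c.\ as a poset'' coincide.

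Next I would run the product fact in both orders. Factoring as $S * \check{\q(T)}$ and using that $S$ is c.c.c., the product $S \times \q(T)$ is c.c.c.\ iff $\Vdash_S$ ``$\q(T)$ is c.c.c.'', which by Baumgartner applied in $V^S$ holds iff $\Vdash_S$ ``$T$ has no cofinal branch'', i.e.\ iff $\Vdash_S$ ``$T$ is Aronszajn''. Factoring instead as $\q(T) * \check S$ and using that $\q(T)$ is c.c.c., the same product is c.c.c.\ iff $\Vdash_{\q(T)}$ ``$S$ is c.c.c.'', i.e.\ iff $\Vdash_{\q(T)}$ ``$S$ is Suslin''. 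Chaining these two equivalences through the common statement ``$S \times \q(T)$ is c.c.c.'' yields exactly the desired biconditional.

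The step I expect to require the most care is the final translation, from ``$S$ is c.c.c.\ as a poset'' to ``$S$ is Suslin'', carried out in $V^{\q(T)}$: c.c.c.-ness of the poset only rules out uncountable antichains, and upgrading it to Suslinity (ruling out a cofinal branch as well) relies on $S$ being a normal $\omega_1$-tree there, so that a cofinal branch would produce an uncountable antichain and contradict c.c.c. I would therefore verify that the c.c.c.\ forcing $\q(T)$ preserves normality of $S$ (it does, since it collapses no cardinals and adds no new nodes), and treat $S$ as normal. The only remaining point is pure bookkeeping: that $\q(T)$ as computed in $V^S$ is the same poset as in $V$, so that Baumgartner's theorem may be invoked there verbatim.
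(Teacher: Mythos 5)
Your proposal is correct and follows essentially the same route as the paper's proof: chaining through the c.c.c.-ness of the product $S \times \q(T)$ read in both factorizations, applying Baumgartner's theorem (Theorem 3.3) inside $V^S$, and using the absoluteness $\q(T)^V = \q(T)^{V^S}$ from the finiteness of conditions. The extra verifications you flag (that $T$ remains an $\omega_1$-tree in $V^S$ and that $S$ remains a normal $\omega_1$-tree in $V^{\q(T)}$, so that ``Aronszajn'' and ``Suslin'' reduce to ``no cofinal branch'' and ``c.c.c.\ as a poset'' respectively) are left implicit in the paper but are exactly the right points to check.
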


\begin{proof}
	We use the fact that for c.c.c.\ forcings $\p$ and $\q$, 
	$\p \times \q$ is c.c.c.\ iff $\Vdash_\p \text{``$\q$ is c.c.c.''}$ 
	Recall that the tree $S$ is Suslin iff the forcing poset 
	$S$ (with the reversed order) is c.c.c. 
	So both forcings $S$ and $\q(T)$ are c.c.c. 
	Therefore, 
	\begin{gather*}
	\Vdash_{\q(T)} \text{``$S$ is Suslin''} \ \Longleftrightarrow \ 
	\Vdash_{\q(T)} \text{``$S$ is c.c.c.''} \ \Longleftrightarrow \\
	\q(T) \times S \ \text{is c.c.c.} \ \Longleftrightarrow \ 
	S \times \q(T) \ \text{is c.c.c.} \ \Longleftrightarrow \ 
	\Vdash_S \text{``$\q(T)$ is c.c.c.''}
	\end{gather*}
	Now since $\q(T)$ is defined by finite conditions, by absoluteness 
	$\q(T)^V = \q(T)^{V^S}$. 
	Thus, $\Vdash_S \text{``$\q(T)$ is c.c.c.''}$ iff 
	$\Vdash_S \text{``$\q(T)^{V^S}$ is c.c.c.''}$, which by Theorem 3.3 
	is equivalent to $\Vdash_S \text{``$T$ is Aronszajn.''}$
\end{proof}	

Now we move on to the topic of making two normal Aronszajn trees club isomorphic 
while preserving some Suslin tree. 
We begin by reviewing the definition of a forcing poset $\q(T,U)$ for making $T$ and $U$ 
club isomorphic. 
This forcing is due to Abraham-Shelah; their definition is slightly different 
but their poset is isomorphic to a dense subset of $\q(T,U)$. 
See Section 5 of \cite{AS} for their definition and the proof of Theorem 3.6 below.

\begin{definition}
	Let $T$ and $U$ be normal Aronszajn trees. 
	Define the forcing poset $\q(T,U)$ to consist of all pairs $(x,f)$, where 
	$x$ is a finite set of countable limit ordinals, 
	$f$ is an injective function whose domain 
	is a finite downwards closed subset of $T \res x$ mapping into $U$, 
	and $f$ is strictly increasing and height preserving. 
	The ordering of $\q(T,U)$ is defined by $(y,g) \le (x,f)$ if $x \subseteq y$ and 
	$f \subseteq g$.
\end{definition}

\begin{thm}
	For any normal Aronszajn trees $T$ and $U$, 
	the forcing poset $\q(T,U)$ is proper.
\end{thm}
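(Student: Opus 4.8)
The plan is to prove properness by producing, for a suitable countable elementary submodel, a master condition. Fix a sufficiently large regular $\theta$ and a countable $M \prec H(\theta)$ with $T, U, \q(T,U) \in M$, and let $\delta := M \cap \omega_1$, a countable limit ordinal. Given $p \in M \cap \q(T,U)$, I would show that $q := (x_p \cup \{\delta\}, f_p)$ is an $(M,\q(T,U))$-generic condition below $p$. First I check that $q$ is a condition: since every ordinal in $x_p$ lies below $\delta$, no element of $T_\delta$ lies below a node of $\dom(f_p)$, so $\dom(f_p)$ remains a finite downwards closed subset of $T \res (x_p \cup \{\delta\})$, and the remaining requirements are inherited from $p$. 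Clearly $q \le p$.

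To establish genericity it suffices to show that for every dense open $D \in M$ and every $r \le q$, the condition $r$ is compatible with some member of $D \cap M$; we may assume $r = (x_r, f_r) \in D$. Split $r$ into its bottom part $\bar r := (x_r \cap \delta, \ f_r \res (T \res \delta))$, consisting of the nodes of $\dom(f_r)$ of height below $\delta$, and its top, the finite set $e \subseteq T \oplus U$ of nodes $t \in \dom(f_r)$ with $\h_T(t) \ge \delta$ together with their images $f_r(t)$. Because every level $T_\alpha, U_\alpha$ with $\alpha < \delta$ is a countable element of $M$ and hence a subset of $M$, all nodes of height below $\delta$ lie in $M$; consequently $\bar r$ is a finite object coded in $M$, so $\bar r \in M \cap \q(T,U)$.

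The heart of the argument is to locate $s \in D \cap M$ whose newly added nodes are incomparable with the top $e$ of $r$, so that $s$ and $r$ amalgamate. Working inside $M$, I would use density together with the normality of $T$ and $U$ to build a sequence $\langle s_\xi : \xi < \omega_1 \rangle \in M$ of conditions in $D$, each extending $\bar r$, whose sets of newly added nodes $b_\xi \subseteq T \oplus U$ are pairwise disjoint with heights increasing to $\omega_1$. Viewing $T \oplus U$ as a single Aronszajn tree (two nodes are comparable there exactly when they are comparable in $T$ or in $U$), Theorem~2.1 applies to the family $\{ b_\xi \}$. The key point is that, since the $b_\xi$ are pairwise disjoint while $e$ consists of finitely many nodes of height at least $\delta$, only countably many $\xi$ can have a node of $b_\xi$ below a node of $e$; choosing $\xi < \delta$ which avoids this countable set---the step where Theorem~2.1 and the elementarity of $M$ are combined to keep the witness inside $M$---yields $s := s_\xi \in D \cap M$ extending $\bar r$ such that every new node of $s$ is incomparable in $T$ with every $T$-node of $e$ and incomparable in $U$ with every $U$-node of $e$.

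It then remains to verify that $r$ and $s$ are compatible, and this amalgamation is the main obstacle. I would check that $(x_r \cup x_s, \ f_r \cup f_s)$, after closing its domain downwards, is a legitimate condition: the two functions agree on their common domain (both extend $\bar r$), and injectivity together with the strictly increasing, height preserving property survive because the only possible conflicts involve a new node of $s$ and a node of $e$, which we arranged to be incomparable in both trees. The finitely many predecessors of the top nodes of $r$ that must be added to restore downwards closure receive forced, mutually coherent images induced by $f_r$, and the delicate part is exactly this coherence of the merged partial isomorphism across mismatched levels $x_s \setminus x_r$; this is precisely where the incomparability supplied by Baumgartner's theorem is essential. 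Once $r \cup s$ is seen to be a condition, it extends both $r$ and $s \in D \cap M$, which completes the verification that $q$ is $(M,\q(T,U))$-generic and hence that $\q(T,U)$ is proper.
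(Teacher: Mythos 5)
Your overall framework is the standard one (and is essentially the route of Abraham--Shelah, Section 5 of \cite{AS}, which this paper cites for Theorem 3.6 rather than reproving it): add $\delta = M \cap \omega_1$ as a level to get a master condition, split $r \le q$ into a bottom part $\bar r \in M$ and a top part $e$ of nodes of height $\ge \delta$, and look for $s \in D \cap M$ whose new nodes are incomparable with $e$. But the step on which everything turns is wrong as you argue it. You claim that ``since the $b_\xi$ are pairwise disjoint while $e$ consists of finitely many nodes of height at least $\delta$, only countably many $\xi$ can have a node of $b_\xi$ below a node of $e$,'' and that one can then choose $\xi < \delta$ avoiding this countable bad set. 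For a node $t \in e$, the trace $\{ u \in T \res \delta : u <_T t \}$ is a cofinal branch of $T \res \delta$, of order type $\delta$; pairwise disjointness of the $b_\xi$ in no way prevents \emph{every} $b_\xi$ with $\xi < \delta$ from meeting this branch, since disjoint finite sets can meet an infinite chain at distinct nodes. As the only indices available in $M$ are exactly the $\xi < \delta$, ``avoid a countable bad set'' selects nothing. Nor can Theorem 2.1 be combined with elementarity in the way you suggest: that theorem produces a pair \emph{within} a single family, and $e$ cannot be inserted into a family inside $M$, because $e \notin M$ (indeed the branches $\{u : u <_T t\} \cap (T \res \delta)$ are not elements of $M$: any chain in $M$ cofinal in $T \res \delta$ with an upper bound would, by elementarity, have an upper bound in $M$, of height below $\delta$, which is absurd). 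The missing idea is a reflection argument using Aronszajn-ness: if every $\xi < \delta$ were bad, then with $k := |e|$, among any $k+1$ of the sets $b_\xi$ ($\xi < \delta$) two would contain comparable nodes (pigeonhole on the $k$ branches); by elementarity of $M$ this holds for \emph{all} $\xi < \omega_1$; then the downward closed subtree of nodes having uncountably many of the relevant nodes above them is uncountable with all antichains of size at most $k$, hence is an uncountable tree with finite levels, hence has a cofinal branch, contradicting that $T$ and $U$ are Aronszajn. Nothing in your proposal supplies this, and it is the mathematical heart of the theorem.

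There is a second, independent gap in the amalgamation. Incomparability of the new nodes of $s$ with $e$ does not by itself make $r \cup s$ extendable to a condition: restoring downward closure adds $t \res \gamma$ for top nodes $t$ and new levels $\gamma \in x_s \setminus x_r$, with intended images $f_r(t) \res \gamma$, and well-definedness plus injectivity require coherence \emph{among the tops themselves}: for distinct $w \ne w'$ in $\dom(f_r) \cap T_\delta$ one needs every new level $\gamma$ to exceed both $\Delta_T(w,w')$ and $\Delta_U(f_r(w),f_r(w'))$ (these ordinals are below $\delta$ precisely because $\delta \in x_r$ forces $\dom(f_r)$ to be closed at level $\delta$ and $f_r$ to be injective there --- this is the real payoff of putting $\delta$ into $q$, which you do not exploit). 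Otherwise one can have $t \res \gamma = t' \res \gamma$ while $f_r(t) \res \gamma \ne f_r(t') \res \gamma$, so the merged map is not even a function. One must therefore fix a threshold $\eta < \delta$ above these finitely many splitting levels and demand that the new nodes and levels of $s$ lie above $\eta$; this is exactly hypothesis (4) of the paper's Lemma 3.7, isolated there for precisely this purpose. Your proposal instead attributes the coherence to ``the incomparability supplied by Baumgartner's theorem,'' which controls only the interaction between the new nodes of $s$ and $e$, not the interaction of the members of $e$ with each other. Both missing ingredients appear in the Abraham--Shelah properness proof that the paper cites.
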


In Theorem 3.10 below 
we prove that if $S$ is a normal Suslin tree, $T$ and $U$ are normal Aronszajn trees, and 
forcing with $S$ does not add an uncountable branch to either $T$ or $U$, then 
forcing with $\q(T,U)$ preserves $S$. 
The proof relies on an analysis about compatibility of conditions 
in $\q(T,U)$.

\begin{lemma}
	Let $T$ and $U$ be normal Aronszajn trees and $(x,f)$ and $(y,g)$ conditions in $\q(T,U)$. 
	Suppose that $\alpha < \beta < \omega_1$ are limit ordinals 
	and the following statements hold:
	\begin{enumerate}
		\item $\alpha \in x$ and $\beta \in y$;
		\item $x \subseteq \beta$ and $x \cap \alpha = y \cap \beta$;
		\item $f \res (T \res \alpha) = g \res (T \res \beta)$;
		\item for all $a, b \in \dom(g) \cap T_\beta$, the ordinals 
		$\Delta_T(a,b)$ and $\Delta_U(g(a),g(b))$ are less than $\alpha$;
		\item every member of $\dom(f) \setminus (T \res \alpha)$ is incomparable 
		in $T$ with every member of $\dom(g) \setminus (T \res \beta)$, and 
		every member of $\ran(f) \setminus (U \res \alpha)$ is incomparable 
		in $U$ with every member of $\ran(g) \setminus (U \res \beta)$.
	\end{enumerate}
	Then $(x,f)$ and $(y,g)$ are compatible in $\q(T,U)$.	
\end{lemma}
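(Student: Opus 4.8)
The plan is to produce a common extension of $(x,f)$ and $(y,g)$ whose first coordinate is $z := x \cup y$. First I would check that $f$ and $g$ cohere. Using (2), any element of $\dom(f) \cap \dom(g)$ has height in $x \cap y$, and since $x \subseteq \beta$ while $y \cap \beta = x \cap \alpha$, such a height must lie below $\alpha$; hence by (3) the two functions agree there, so $f \cup g$ is a well-defined function. The same height computation, together with the injectivity of $f$ and $g$ and height preservation, gives injectivity of $f \cup g$: if $f(s) = g(t)$ then $s,t$ share a height in $x \cap y \subseteq \alpha$, so $s \in \dom(g)$ by (3) and injectivity of $g$ forces $s = t$. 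For strict monotonicity I would reduce every mixed comparable pair to the common domain: if $s \in \dom(f)$, $t \in \dom(g)$ and $s <_T t$, then either $\h_T(s) < \alpha$, in which case (3) puts $s \in \dom(g)$ and $g$'s monotonicity applies, or $\h_T(s) \ge \alpha$, which by (5) is incompatible with $s <_T t$ unless $t \in T \res \beta$, and the height bounds of (2) then rule this out; the case $s \in \dom(g)$, $t \in \dom(f)$ is symmetric.

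The subtlety is that $\dom(f) \cup \dom(g)$ need not be downwards closed in $T \res z$. Chasing the definition, every required predecessor of an element of $\dom(f)$ is already present (predecessors at heights in $y \setminus x$ below $\beta$ would lie in $y \cap \beta = x \cap \alpha \subseteq x$, a contradiction), and every predecessor of an element of $\dom(g)$ at a height in $y$ is present by downward closure of $\dom(g)$ in $T \res y$. The only missing points are predecessors $s$ of elements $t \in \dom(g) \setminus (T \res \beta)$ whose height lies in $(x \setminus y) \cap [\alpha,\beta)$. I would therefore let $D$ be the downward closure of $\dom(f) \cup \dom(g)$ in $T \res z$ and extend $f \cup g$ to a function $h$ on $D$ by setting $h(s) := g(t) \res \h_T(s)$ for each such new point $s$, where $t$ is any element of $\dom(g)$ above $s$.

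The heart of the argument is that $h$ is well defined and remains an injective, strictly increasing, height preserving function, and this is precisely what hypotheses (4) and (5) deliver. Well-definedness uses the $\Delta_T$-clause of (4): if $s <_T t$ and $s <_T t'$ with $t,t' \in \dom(g) \setminus (T \res \beta)$ and $\h_T(s) \ge \alpha$, then $t \res \beta$ and $t' \res \beta$ lie in $\dom(g) \cap T_\beta$ and share the predecessor $s$ of height $\ge \alpha$, so $\Delta_T(t \res \beta, t' \res \beta) \ge \alpha$ would contradict (4) unless $t \res \beta = t' \res \beta$; hence $g(t) \res \h_T(s) = g(t') \res \h_T(s)$ and $h(s)$ is independent of the choice of $t$. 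Strict monotonicity of a new point against $\ran(g)$ is immediate from $h(s) <_U g(t)$, and the $\Delta_U$-clause of (4) governs how the images of distinct branches through level $\beta$ sit below $\alpha$.

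I expect the main obstacle to be injectivity of $h$: I must rule out that a new image $h(s) = g(t) \res \h_T(s)$, which lies at a height in $[\alpha,\beta)$ on a branch below an element of $\ran(g) \setminus (U \res \beta)$, coincides with a value already taken by $f$ or $g$ or by another new point. A collision with the high part $\ran(f) \setminus (U \res \alpha)$ is excluded by the incomparability clause of (5), since $h(s) <_U g(t)$ would make an element of $\ran(f) \setminus (U \res \alpha)$ comparable with the element $g(t)$ of $\ran(g) \setminus (U \res \beta)$; a collision with the low parts is excluded by the height bound $\h_T(s) \ge \alpha$ together with (3); and collisions among new points and with $\ran(g) \cap (U \res \beta)$ are controlled by the $\Delta_U$-bound in (4). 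Once injectivity is secured, $(z,h)$ is a condition of $\q(T,U)$ extending both $(x,f)$ and $(y,g)$, which witnesses their compatibility.
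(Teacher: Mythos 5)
Your proposal is correct and follows essentially the same route as the paper's proof: extend $f \cup g$ to the downward closure of its domain in $T \res (x \cup y)$ by setting $h(s) := g(t) \res \h_T(s)$ for $t \in \dom(g)$ above $s$, with the $\Delta_T$-clause of (4) giving well-definedness (the paper phrases this as uniqueness of the $a \in \dom(g) \cap T_\beta$ with $s = a \res \h_T(s)$), the $\Delta_U$-clause handling collisions among new points, and (5) ruling out interactions with $\dom(f) \setminus (T \res \alpha)$ and $\ran(f) \setminus (U \res \alpha)$. The only details left implicit --- that a new point cannot be $T$-comparable with an element of $\dom(f) \setminus (T \res \alpha)$, which the paper verifies as two of its five monotonicity cases, and that collisions of new values with $\ran(g) \cap (U \res \beta)$ are excluded simply because such values have height below $\alpha$ rather than by (4) --- follow from (5) exactly as you already deploy it for injectivity, so nothing essential is missing.
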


\begin{proof}
	Using (5), it is easy to check that the pair 	
	$(x \cup y,f \cup g)$ satisfies all of the requirements of being a condition 
	except that the domain of the function 
	$f \cup g$ is not necessarily downwards closed in $T \res (x \cup y)$. 
	So we extend $f \cup g$ to a function $h$ whose domain is downwards closed 
	in $T \res (x \cup y)$ and then verify that $(x \cup y,h)$ is a condition. 
	Such an extension $h$ is obtained by adding to the domain of $f \cup g$ all elements 
	of $T$ of the form $a \res \gamma$, where $a \in \dom(g) \cap T_\beta$ and 
	$\gamma \in x \setminus \alpha$, 
	and defining $h(a \res \gamma) := g(a) \res \gamma$. 
	Note that by (4), for any new element $c$ of height $\gamma$ 
	there exists a \emph{unique} element 
	$a \in \dom(g) \cap T_\beta$ such that $c = a \res \gamma$.

	Obviously $h$ is height preserving and its domain 
	is downwards closed in $T \res (x \cup y)$. 
	If $h$ is not injective, then there are distinct $c$ and $d$ of the same 
	height $\gamma \in x \setminus \alpha$, at least one of which is new, 
	such that $h(c) = h(d)$. 
	Suppose that $c$ and $d$ are both new. 
	Then there are distinct $a$ and $b$ in $\dom(g) \cap T_\beta$ such that 
	$c = a \res \gamma$ and $d = b \res \gamma$. 
	But then $h(c) = h(d) \le_U g(a),g(b)$, which contradicts that 
	$\Delta_U(g(a),g(b)) < \alpha$ by (4). 
	If just one of them is new, then we may 
	assume $c = a \res \gamma$ where $a \in \dom(g) \cap T_\beta$ 
	and $d \in \dom(f)$. 
	Then $f(d) = h(d) = h(c) = g(a) \res \gamma <_U g(a)$, contradicting (5).

	It remains to prove that for all $c, d \in \dom(h)$, 
	$c <_T d$ implies $h(c) <_U h(d)$. 
	It suffices to verify this in the case where at least one of $c$ or $d$ is new.
	
	Case 1: $\h_T(c) < \alpha$ and $d = a \res \gamma$ is new. 
	Note $c \in \dom(g)$. 
	If $c <_T d$ then $c <_T a$, hence $g(c) <_U g(a)$. 
	Therefore, $h(c) = g(c) <_U g(a) \res \gamma = h(a \res \gamma) = h(d)$.
	
	Case 2: $c \in \dom(f) \setminus (T \res \alpha)$ and $d = a \res \gamma$ is new. 
	Then $c$ cannot be below $d$ in $T$, for otherwise $c <_T a$ 
	which contradicts (5).

	Case 3: $c = a \res \gamma$ is new and $d \in \dom(f) \setminus \alpha$. 
	If $c <_T d$, then $a \res \alpha = c \res \alpha <_T d$, and hence 
	$a \res \alpha = d \res \alpha$.  
	But $\dom(f)$ is downwards closed in $T \res x$ and $\alpha \in x$, 
	so $d \res \alpha$ is in $\dom(f)$. 
	then $d \res \alpha = a \res \alpha <_T a$, 
	which contradicts (5).

	Case 4: $c = a \res \gamma$ and $d = b \res \xi$ are both new. 
	Assume $c <_T d$. 
	Then $c <_T a, b$. 
	If $a \ne b$, then $\Delta_T(a,b) < \alpha \le \gamma = \h_T(c)$, which 
	contradicts that $c <_T a,b$.  
	So $a = b$. 
	Therefore, $h(c) = g(a) \res \gamma <_U g(a) \res \xi = h(d)$.
	
	Case 5: $c = a \res \gamma$ is new and $d \in \dom(g) \setminus \beta$. 
	Assume $c <_T d$. 
	Then $a \le_T d$, for otherwise by (4) 
	$\Delta_T(a,d) = \Delta_T(a,d \res \beta) < \alpha \le \h_T(c)$, which 
	contradicts that $c <_T a,d$. 
	Hence, $h(c) = g(a) \res \gamma <_U g(a) \le_U g(d) = h(d)$.
\end{proof}

\begin{lemma}
	Suppose that $T$ and $U$ are normal Aronszajn trees. 
	Let $Y \subseteq \omega_1$ be a stationary set of limit ordinals. 
	Assume that $\{ (x_\alpha,f_\alpha) : \alpha \in Y \}$ is a set of conditions 
	in $\q(T,U)$ such that for all $\alpha \in Y$, $\alpha \in x_\alpha$. 
	Then there exists $\alpha < \beta$ in $Y$ such that 
	$(x_\alpha,f_\alpha)$ and $(x_\beta,f_\beta)$ are compatible.
\end{lemma}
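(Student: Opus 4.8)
The plan is to reduce the statement to a single application of the preceding compatibility lemma. To do this I would thin the family $\{ (x_\alpha,f_\alpha) : \alpha \in Y \}$ to a stationary subfamily whose members share a uniform ``lower part,'' and then invoke Baumgartner's theorem (Theorem 2.1) to arrange the incomparability of the ``upper parts'' demanded by clause (5) of that lemma.

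First I would pin down the part of each condition lying below its critical ordinal. For $\alpha \in Y$, choose a regressive $\rho(\alpha) < \alpha$ large enough that $x_\alpha \cap \alpha \subseteq \rho(\alpha)$ and, for all distinct $a,b \in \dom(f_\alpha) \cap T_\alpha$, both $\Delta_T(a,b)$ and $\Delta_U(f_\alpha(a),f_\alpha(b))$ are below $\rho(\alpha)$; this is possible since $x_\alpha$ is finite and only finitely many such splitting levels occur, each strictly below $\alpha$. By Fodor's lemma, fix $\rho \equiv \gamma^*$ on a stationary set, and then, as $T \res \gamma^*$ and $U \res \gamma^*$ are countable, there are only countably many possible values of the pair $(x_\alpha \cap \alpha,\ f_\alpha \res (T \res \gamma^*))$; shrinking once more yields a stationary $Y_2 \subseteq Y$ on which $x_\alpha \cap \alpha = x^*$ and $f_\alpha \res (T \res \gamma^*) = f^*$ are constant. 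Because $x_\alpha \cap \alpha = x^* \subseteq \gamma^*$, every element of $\dom(f_\alpha)$ of height below $\alpha$ already lies in $T \res \gamma^*$, so in fact $f_\alpha \res (T \res \alpha) = f^*$ for all $\alpha \in Y_2$.

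Next I would spread the supports out. Intersecting $Y_2$ with the club of $\alpha$ such that $\max x_\beta < \alpha$ for every $\beta \in Y_2 \cap \alpha$ gives a stationary $Y_3$ with the property that $\alpha < \beta$ in $Y_3$ implies $x_\alpha \subseteq \beta$. For $\alpha \in Y_3$ let $z_\alpha$ be the finite set consisting of the elements of $\dom(f_\alpha)$ of height $\ge \alpha$ together with their $f_\alpha$-images, viewed inside the Aronszajn tree $R$ formed from the disjoint union of $T$ and $U$ with a common root adjoined. Since every member of $z_\alpha$ has height (in $T$ or in $U$) in the interval $[\alpha,\max x_\alpha]$, and the supports are spread out, the sets $\{ z_\alpha : \alpha \in Y_3 \}$ are pairwise disjoint, so Baumgartner's theorem (Theorem 2.1) supplies $\alpha < \beta$ in $Y_3$ with every element of $z_\alpha$ incomparable in $R$ with every element of $z_\beta$. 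As $T$ and $U$ sit as mutually incomparable subtrees of $R$ above the root, and comparability within each of them is unchanged in $R$, this delivers clause (5) of the preceding lemma for both the domains and the ranges.

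Finally I would verify the remaining hypotheses for this pair $\alpha < \beta$. Clause (1) holds since $\alpha \in x_\alpha$ and $\beta \in x_\beta$ by assumption; clause (2) holds because $x_\alpha \subseteq \beta$ by the spreading and $x_\alpha \cap \alpha = x^* = x_\beta \cap \beta$; clause (3) holds because $f_\alpha \res (T \res \alpha) = f^*$ and, using $x_\beta \cap \beta = x^* \subseteq \gamma^*$, the restriction $f_\beta \res (T \res \beta)$ collapses to $f_\beta \res (T \res \gamma^*) = f^*$; and clause (4) holds because the splitting levels of the height-$\beta$ nodes of $f_\beta$ were arranged to lie below $\rho(\beta) = \gamma^* < \alpha$. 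The preceding lemma then yields that $(x_\alpha,f_\alpha)$ and $(x_\beta,f_\beta)$ are compatible. I expect the main obstacle to be the bookkeeping in the thinning step: one must design the regressive function $\rho$ so that clause (4)---the requirement that the top nodes of the later condition split low---is secured automatically, and one must fold the two separate incomparability demands (in $T$ for the domains, in $U$ for the ranges) into one use of Baumgartner's theorem by passing to the amalgamated tree $R$.
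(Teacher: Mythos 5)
Your proposal is correct and takes essentially the same approach as the paper: the paper's proof likewise presses down to stabilize $x_\alpha \cap \alpha$, $f_\alpha \res (T \res \alpha)$, and the splitting levels $\Delta_T$, $\Delta_U$ below a fixed $\gamma < \min(Y_0)$, spreads out the supports so that $\alpha < \beta$ implies $x_\alpha \subseteq \beta$, applies Theorem 2.1 to the disjoint union of $T$ and $U$ to secure clause (5), and concludes via Lemma 3.7. The only differences are presentational---you carry out explicitly the thinning the paper compresses into ``a straightforward pressing down argument'' (Fodor, a countable counting of lower parts, and a closure club), and you adjoin a common root to the disjoint union, which changes nothing.
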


\begin{proof}
	By a straightforward pressing down argument, we can find a stationary set 
	$Y_0 \subseteq Y$, a function $f$, a set $x$, 
	and an ordinal $\gamma < \omega_1$ less than $\min(Y_0)$ 
	such that for all $\alpha \in Y_0$, 
	\begin{enumerate}
		\item $x_\alpha \cap \alpha = x$;
		\item $f_\alpha \res (T \res \alpha) = f$;
		\item for all distinct $a$ and $b$ in $\dom(f_\alpha) \cap T_\alpha$, 
		both of the ordinals $\Delta_T(a,b)$ and 
		$\Delta_U(f_\alpha(a),f_\alpha(b))$ are less than $\gamma$;
		\item for all $\beta \in Y_0$ larger than $\alpha$, 
		$x_\alpha \subseteq \beta$.
	\end{enumerate}
	Now applying Theorem 2.1 to the disjoint union of the trees $T$ and $U$, 
	we can find $\alpha < \beta$ in $Y_0$ 
	such that every member of $\dom(f_\alpha) \setminus (T \res \alpha)$ is incomparable 
	in $T$ with every member of $\dom(f_\beta) \setminus (T \res \beta)$, and every 
	member of $\ran(f_\alpha) \setminus (U \res \alpha)$ is incomparable in $U$ 
	with every member of $\ran(f_\beta) \setminus (U \res \beta)$. 
	By Lemma 3.7, the conditions 
	$(x_\alpha,f_\alpha)$ and $(x_\beta,f_\beta)$ are compatible.
\end{proof}

We need one more general result about Suslin trees.

\begin{lemma}
	Let $S$ be a Suslin tree. 
	Consider $\{ b_\alpha : \alpha \in Z \} \subseteq S$, where 
	$Z \subseteq \omega_1$ is stationary. 
	Then there exists some $a \in S$ such that for all $d >_S a$, 
	the set $Z_d := \{ \alpha \in Z : d \le_S b_\alpha \}$ 
	is stationary.
\end{lemma}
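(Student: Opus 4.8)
The plan is to exploit the forcing poset structure of the Suslin tree $S$ together with the standard fact recorded above that every dense open subset of $S$ (as a forcing poset) contains an entire level $S_{\alpha_0}$. First I would set
$$
D := \{\, d \in S : Z_d \text{ is nonstationary} \,\}.
$$
The assertion of the lemma is exactly that $D$ fails to be dense in the forcing poset $S$ (ordered by reverse tree order): a node $a$ witnessing non-density is one for which no $d \ge_S a$ lies in $D$, which is to say that $Z_d$ is stationary for every $d >_S a$. So it is enough to prove that $D$ is not dense.

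I would first check that $D$ is open in the forcing poset, i.e.\ upward closed in the tree: if $d \le_S d'$, then $b_\alpha \ge_S d'$ forces $b_\alpha \ge_S d$, so $Z_{d'} \subseteq Z_d$, and nonstationarity of $Z_d$ passes to $Z_{d'}$. Now suppose toward a contradiction that $D$ is dense. Since it is dense and open, the cited level-capturing fact yields a level $\alpha_0 < \omega_1$ with $S_{\alpha_0} \subseteq D$, so $Z_s$ is nonstationary for every $s \in S_{\alpha_0}$.

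The contradiction then comes from splitting $Z$ according to the height of $b_\alpha$. Every $\alpha \in Z$ with $\h_S(b_\alpha) \ge \alpha_0$ satisfies $b_\alpha \ge_S (b_\alpha \res \alpha_0)$ with $b_\alpha \res \alpha_0 \in S_{\alpha_0}$, so
$$
\{\, \alpha \in Z : \h_S(b_\alpha) \ge \alpha_0 \,\} = \bigcup_{s \in S_{\alpha_0}} Z_s
$$
is a countable union (levels of $S$ are countable) of nonstationary sets and hence nonstationary. The complementary set $\{\, \alpha \in Z : \h_S(b_\alpha) < \alpha_0 \,\}$ is likewise nonstationary, since the $b_\alpha$ are distinct and $S \res \alpha_0$ is countable, so only countably many indices land below level $\alpha_0$. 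Thus $Z$ would be a union of two nonstationary sets, contradicting that $Z$ is stationary. Hence $D$ is not dense, and any witness to this is the desired $a$.

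I expect the only delicate point to be the treatment of the low part $\{\, \alpha \in Z : \h_S(b_\alpha) < \alpha_0 \,\}$: this is the sole place where one must use that the family $\{\, b_\alpha : \alpha \in Z \,\}$ does not concentrate stationarily on a bounded set of levels (guaranteed here by the $b_\alpha$ being distinct, equivalently that each level of $S$ carries only countably many of them). Everything else is the soft observation that $D$ is open together with the characteristic level-capturing behavior of dense open subsets of a Suslin tree.
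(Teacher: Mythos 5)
Your proposal is correct and is essentially the paper's own argument: the paper likewise assumes the conclusion fails, extracts a level $S_\gamma$ contained in the upward closure of the set of bad nodes, and then derives a contradiction by intersecting the countably many witnessing clubs for $a \in S \res \gamma$, which is the same use of countable completeness of the nonstationary ideal as your decomposition of $Z$ into $\bigcup_{s \in S_{\alpha_0}} Z_s$ together with the low part. Your explicit flag about the low part is well taken: the paper's corresponding step --- fixing $\alpha \in D \cap Z$ with $\h_S(b_\alpha) \ge \gamma$ --- silently relies on the same fact that only nonstationarily many indices $\alpha$ can have $b_\alpha$ below level $\gamma$.
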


\begin{proof}
	Suppose not. 
	Then for all $a \in S$ we can fix $d_a >_S a$ and a club $C_a \subseteq \omega_1$ 
	such that $C_a \cap Z_{d_a} = \emptyset$. 
	Now the set $\{ d_a : a \in S \}$ is obviously dense, so its upwards 
	closure is dense open. 
	Since $S$ is Suslin, 
	we can fix some $\gamma < \omega_1$ such that for all $y \in S_\gamma$, 
	there exists some $a \in S$ such that $d_a \le_S y$.
	
	Let $D := \bigcap \{ C_a : a \in S \res \gamma \}$, which is a club since 
	$S \res \gamma$ is countable.
	As $Z$ is stationary, $D \cap Z$ is stationary. 
	So we can fix some $\alpha \in D \cap Z$ 
	such that $b_\alpha$ has height greater than or equal to $\gamma$. 
	Let $y := b_\alpha \res \gamma$. 
	By the choice of $\gamma$, there exists some $a \in S$ such that 
	$d_a \le_S y$. 
	Then $a \in S \res \gamma$, so $D \subseteq C_a$, and therefore $\alpha \in C_a$. 
	But $d_a \le_S y \le_S b_\alpha$, which means that $\alpha \in Z_{d_a}$. 
	So $\alpha \in C_a \cap Z_{d_a} = \emptyset$, which is a contradiction.
\end{proof}

\begin{thm}
	Suppose that $S$ is a normal Suslin tree. 
	Let $T$ and $U$ be normal Aronszajn trees such that 
	$$
	\Vdash_{S} \text{``$T$ and $U$ are Aronszajn.''}
	$$ 
	Then $\Vdash_{\q(T,U)} \text{``$S$ is Suslin.''}$
\end{thm}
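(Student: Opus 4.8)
The plan is to show that $\q(T,U)$ forces $S$ to remain Suslin by a product argument combined with the compatibility analysis already developed. Since $S$ (as a forcing poset with reversed order) is c.c.c.\ exactly when $S$ is Suslin, and $\q(T,U)$ is proper by Theorem 3.6, the natural strategy is to verify directly that $S$ stays Suslin in $V^{\q(T,U)}$ by a contradiction argument: suppose some condition $(x,f)$ forces that $\{\dot b_\alpha : \alpha < \omega_1\}$ is an uncountable antichain of $S$. Extending and refining, I would fix for each $\alpha$ in a stationary set a condition $(x_\alpha, f_\alpha) \le (x,f)$ deciding $\dot b_\alpha$ to be some actual element $b_\alpha \in S$, and arranging by thinning that $\alpha \in x_\alpha$ (using normality of the trees and that the conditions are finite). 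The goal is to find $\alpha < \beta$ with $(x_\alpha,f_\alpha)$ and $(x_\beta,f_\beta)$ compatible \emph{and} $b_\alpha$, $b_\beta$ comparable in $S$, contradicting that they were forced to be incomparable.

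First I would invoke Lemma 3.11 applied to the family $\{b_\alpha : \alpha \in Z\}$ to obtain a node $a \in S$ such that for every $d >_S a$, the set $Z_d = \{\alpha : d \le_S b_\alpha\}$ is stationary. The point of this step is that it lets me locate a single stem $a$ below which the $b_\alpha$'s cluster densely, so that I can select two indices whose $b$-values share a common extension and are therefore comparable. Next, passing to a stationary refinement, I would run the pressing-down normalization exactly as in Lemma 3.9 to stabilize the conditions: a fixed $x$, a fixed $f$, and a bound $\gamma$ controlling the splitting levels $\Delta_T$ and $\Delta_U$ among the top nodes of each $\dom(f_\alpha)$. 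Then, applying Theorem 2.1 (Baumgartner) to the disjoint union of $T$ and $U$, I obtain $\alpha < \beta$ in the refined stationary set witnessing the mutual incomparability required by hypothesis (5) of Lemma 3.7. At that point Lemma 3.9's conclusion gives that $(x_\alpha,f_\alpha)$ and $(x_\beta,f_\beta)$ are compatible in $\q(T,U)$.

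The remaining task, and the genuine obstacle, is to coordinate the \emph{compatibility of the conditions} with the \emph{comparability of $b_\alpha$ and $b_\beta$} in $S$. Lemma 3.9 alone produces compatible pairs but says nothing about the $b_\alpha$'s, while Lemma 3.11 controls the $b_\alpha$'s but not the conditions. The key is that the hypothesis $\Vdash_S \text{``}T,U$ are Aronszajn''} must be used to prevent the forcing $\q(T,U)$ from interacting badly with branches of $S$: I expect the argument to pass through $V^S$, viewing $\q(T,U)$ as a forcing in the extension by $S$ and exploiting that $\q(T,U)^V = \q(T,U)^{V^S}$ by absoluteness of finite conditions. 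Concretely, I would force with $S$ below a node $d >_S a$ chosen (via Lemma 3.11) so that $Z_d$ is stationary, obtaining a generic branch through $d$; inside $V^{S}$, $T$ and $U$ are still Aronszajn, so the pressing-down and Baumgartner steps can be carried out there on the stationary set $Z_d$, whose members all have $b_\alpha \ge_S d$ and hence pairwise comparable along the branch. Producing the two indices with compatible conditions \emph{within} this stationary set then yields $\alpha,\beta \in Z_d$ with $(x_\alpha,f_\alpha) \parallel (x_\beta,f_\beta)$ and $b_\alpha,b_\beta \ge_S d$ comparable, the desired contradiction. Making this transfer between $V$ and $V^S$ precise — ensuring the conditions and the combinatorial data used by Lemmas 3.9 and 3.11 are correctly interpreted in the extension, and that stationarity of $Z_d$ is preserved — is where the real care is needed.
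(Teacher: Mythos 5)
Your overall architecture matches the paper's proof of Theorem 3.10: decide the antichain elements $b_\alpha$ by conditions $q_\alpha = (x_\alpha,f_\alpha)$ with $\alpha \in x_\alpha$, observe that compatibility of $q_\alpha$ and $q_\beta$ forces $b_\alpha$ and $b_\beta$ to be incomparable in $S$, locate a node $a \in S$ via the Suslin-tree lemma (the paper's Lemma 3.9, your ``Lemma 3.11''), pass to $V^S$ where $\q(T,U)$ is absolute by finiteness of conditions, and run the pressing-down/Baumgartner compatibility lemma (the paper's Lemma 3.8) in the extension. But there is a genuine gap at the decisive step. You propose to run the compatibility lemma in $V^S$ on the stationary set $Z_d = \{ \alpha : d \le_S b_\alpha \}$ for a single $d >_S a$, asserting that its members have ``$b_\alpha \ge_S d$ and hence pairwise comparable along the branch.'' That is false: two nodes of $S$ above $d$ need not be comparable; only nodes lying on the generic branch $G$ itself are pairwise comparable. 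If you apply the compatibility lemma to $\{ q_\alpha : \alpha \in Z_d \}$, you obtain $\alpha < \beta$ in $Z_d$ with $q_\alpha$ and $q_\beta$ compatible, hence $b_\alpha$ and $b_\beta$ incomparable --- but both are merely above $d$, which is no contradiction at all.

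What is missing is the paper's intermediate claim: $a \Vdash_S \text{``} \{ \alpha \in Z : b_\alpha \in \dot G_S \} \text{ is stationary''}$. This is exactly where the full strength of the Suslin-tree lemma is used --- stationarity of $Z_d$ for \emph{every} $d >_S a$, not just one --- together with the c.c.c.\ of $S$: if some $d >_S a$ and a name $\dot D$ for a club forced $b_\alpha \notin \dot G_S$ for all $\alpha \in \dot D \cap Z$, one finds a ground-model club $E$ with $d \Vdash_S E \subseteq \dot D$ (by c.c.c.), picks $\alpha \in Z_d \cap E$, and notes that $b_\alpha$ itself, extending $d$, forces $b_\alpha \in \dot G_S$, a contradiction. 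With that claim in hand, the correct stationary set in $V[G]$ is $Y = \{ \alpha : b_\alpha \in G \}$, whose members' $b$-values are pairwise comparable, and applying the compatibility lemma to $\{ q_\alpha : \alpha \in Y \}$ yields the desired contradiction. Mere preservation of the stationarity of $Z_d$ under the c.c.c.\ forcing $S$ --- which is all your sketch secures --- does not deliver this, since $Y$ can in principle be much smaller than $Z_d$. (A small additional point: you cannot arrange $\alpha \in x_\alpha$ ``by thinning''; as in the paper, one first extends $(x,f)$ to $(x \cup \{\alpha\}, f)$ and only then extends further to decide $\dot b_\alpha$.)
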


\begin{proof}
	We prove the contrapositive. 
	Assume that there is a condition $p \in \q(T,U)$ such that 
	$$
	p \Vdash_{\q(T,U)} \text{``$\dot A = \{ \dot a_\alpha : \alpha < \omega_1 \}$ 
	is an uncountable antichain of $S$.''}
	$$
	We will find some $a \in S$ which forces in $S$ that either $T$ or $U$ 
	is not Aronszajn.
	
	Write $p = (x,f)$ and let $Z$ be the set of limit ordinals in $\omega_1$ above $\max(x)$. 
	For each $\alpha \in Z$, let $p_\alpha = (x \cup \{ \alpha \},f)$, 
	which is clearly a condition below $p$. 
	Extend each $p_\alpha$ to some $q_\alpha = (x_\alpha,f_\alpha)$ 
	which forces, for some $b_\alpha \in S$, that $\dot a_\alpha$ equals $b_\alpha$. 
	This gives us a family of conditions 
	$\{ (x_\alpha,f_\alpha) : \alpha \in Z \}$ 
	satisfying that for all $\alpha \in Z$, $\alpha \in x_\alpha$.
	
	Consider any $\alpha < \beta$ in $Z$ and suppose that $q_\alpha$ and 
	$q_\beta$ are compatible in $\q(T,U)$. 
	Fix $r \le q_\alpha, q_\beta$. 
	Then $r$ forces that $b_\alpha$ and $b_\beta$ are both in the antichain $\dot A$ 
	and hence are incomparable in $S$. 
	But then $b_\alpha$ and $b_\beta$ really are incomparable in $S$.

	Applying Lemma 3.9 to the collection $\{ b_\alpha : \alpha \in Z \}$, 
	we can find some $a \in S$ such that for all $d >_S a$, 
	the set 
	$$
	Z_d := \{ \alpha \in Z : d \le_S b_\alpha \}
	$$
	is stationary. 
	We claim that 
	$$
	a \Vdash_S 
	\text{``$\{ \alpha \in Z : b_\alpha \in \dot G_S \}$ is stationary.''}
	$$
	If not, then there exists some $d >_S a$ and an $S$-name $\dot D$ for a club 
	subset of $\omega_1$ such that 
	$$
	d \Vdash_S \text{``for all $\alpha \in \dot D \cap Z$, $b_\alpha \notin \dot G_S$.''}
	$$
	Since $S$ is c.c.c., we can find a club $E \subseteq \omega_1$ such that 
	$d$ forces that $E \subseteq \dot D$. 
	Now $Z_d$ is stationary, so we can fix some $\alpha \in Z_d \cap E$. 
	Then $d$ forces that $\alpha \in \dot D$ and hence that $b_\alpha \notin \dot G_S$. 
	On the other hand, $\alpha \in Z_d$ so $d \le_S b_\alpha$. 
	But then $b_\alpha$ extends $d$ in the forcing $S$, so $b_\alpha$ forces 
	that $b_\alpha \notin \dot G_S$, which is impossible. 
	This completes the proof of the claim.

	Let $G$ be a generic filter on $S$ such that $a \in G$. 
	We will prove that in $V[G]$, either $T$ or $U$ is not Aronszajn. 
	Suppose for a contradiction that both $T$ and $U$ are Aronszajn in $V[G]$. 
	Note that the definition of $\q(T,U)$ is absolute between $V$ and $V[G]$ due to 
	the finiteness of the conditions. 
	That is, $\q(T,U)^V = \q(T,U)^{V[G]}$. 
	In $V[G]$, define $Y := \{ \alpha \in Z : b_\alpha \in G \}$. 
	By the claim, $Y$ is stationary. 
	Moreover, the collection 
	$\{ (x_\alpha,f_\alpha) : \alpha \in Y \}$ 
	is a subset of $\q(T,U)^{V[G]}$ 
	which satisfies that for all $\alpha \in Y$, $\alpha \in x_\alpha$.

	Since $T$ and $U$ are normal Aronszajn trees in $V[G]$, we can 
	apply Lemma 3.8 in $V[G]$ to find some $\alpha < \beta$ in $Y$ such that 
	$q_\alpha = (x_\alpha,f_\alpha)$ and $q_\beta = (x_\beta,f_\beta)$ 
	are compatible in $\q(T,U)^{V[G]}$. 
	By absoluteness, $q_\alpha$ and $q_\beta$ are compatible in $\q(T,U)$ in $V$. 
	As observed above, the compatibility of $q_\alpha$ and $q_\beta$ in $\q(T,U)$  
	implies that $b_\alpha$ and $b_\beta$ are incomparable in $S$. 
	But $b_\alpha$ and $b_\beta$ are both in $G$, so they are comparable in $S$. 
	This contradiction completes the proof that either $T$ or $U$ is not 
	an Aronszajn tree in $V[G]$.
\end{proof}

\section{Consistency Results}

In this section we will apply the theorems of the previous section to 
prove some consistency results concerning Suslin trees. 
In our first result, we construct a model in which there exists a Suslin tree 
and any two normal Aronszajn trees, neither of which contains a Suslin subtree, 
are club isomorphic. 
In the second result, we prove that if $S$ is a free Suslin tree, then for any 
positive integer $n$ there exists a c.c.c.\ forcing poset which forces that $S$ 
is $n$-free but any derived tree of dimension $n+1$ is special.

Previously, Abraham-Shelah proved that it is consistent that there exists 
a Suslin tree and any Aronszajn tree either contains a Suslin tree or is special 
(see Section 4 of \cite{AS}). 
We strengthen their result by constructing a model with a Suslin tree 
in which there exists an essentially unique Aronszajn tree with no Suslin subtree 
(which of course must be special).

\begin{thm}
	Suppose that $S$ is a normal Suslin tree, $2^{\omega} = \omega_1$, and 
	$2^{\omega_1} = \omega_2$. 
	Then there exists a forcing poset which forces:
	\begin{itemize}
		\item $S$ is a Suslin tree;
		\item if $T$ and $U$ are normal Aronszajn trees, neither of which contains 
		a Suslin subtree, then $T$ and $U$ are club isomorphic.
	\end{itemize}
\end{thm}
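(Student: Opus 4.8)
The plan is to build a countable support iteration $\langle \p_i, \dot\q_j : i \le \omega_2,\ j < \omega_2 \rangle$ of proper forcings whose nontrivial iterands all have the form $\q(T,U)$ for pairs of normal Aronszajn trees appearing in the relevant intermediate model, using a bookkeeping function to catch every pair. Two earlier results drive the construction. By Theorem 3.6 each $\q(T,U)$ is proper, and by Theorem 3.10 it preserves $S$ being Suslin provided $\Vdash_S$ ``$T$ and $U$ are Aronszajn'' holds. By Theorem 2.2, a countable support iteration of proper forcings each of which preserves $S$ being Suslin again preserves $S$ being Suslin. So the strategy is to force with $\q(T,U)$ \emph{only} when the hypothesis of Theorem 3.10 is verified in the current model: concretely, at stage $j$ the bookkeeping hands me $\p_j$-names $\dot T_j,\dot U_j$, and in $V[G_j]$ I set $\dot\q_j := \q(\dot T_j,\dot U_j)$ if these name normal Aronszajn trees with $\Vdash_S$ ``$\dot T_j,\dot U_j$ are Aronszajn,'' and let $\dot\q_j$ be trivial otherwise. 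With this guard in place, $\p_{\omega_2}$ is proper and forces that $S$ is Suslin, which is the first bullet.

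Using $2^\omega = \omega_1$ and $2^{\omega_1} = \omega_2$, one arranges by the standard $\aleph_2$-c.c.\ name-counting for countable support iterations of proper forcings of size $\le \aleph_1$ that every pair of names for normal Aronszajn trees occurring in any intermediate model is enumerated at unboundedly many stages. For the second bullet, let $T,U$ be normal Aronszajn trees in $V[G_{\omega_2}]$, neither containing a Suslin subtree. Coding each as a subset of $\omega_1$ and invoking $\aleph_2$-c.c., fix a stage $i < \omega_2$ with $T,U \in V[G_i]$; normality and being Aronszajn are downward absolute, so $T,U$ remain normal Aronszajn at every stage $\ge i$. The crux is the following claim: for every $j \in [i,\omega_2)$, $V[G_j] \models$ ``$\Vdash_S$ $T$ and $U$ are Aronszajn.'' Granting it, the bookkeeping eventually presents (names for) $T,U$ at some stage $j \ge i$, where the hypothesis of Theorem 3.10 holds; hence $\dot\q_j = \q(T,U)$ is forced nontrivially and $V[G_{j+1}]$ contains a club $C$ with $T \res C \cong U \res C$. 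A club stays club and a fixed order isomorphism between fixed trees remains one in any outer model, so $T$ and $U$ are club isomorphic in $V[G_{\omega_2}]$.

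It remains to prove the claim. Suppose it fails in some $V[G_j]$ with $j \ge i$, say $x \in S$ forces in $S$ that $T$ has a cofinal branch. By Proposition 3.4 and the remark after it, there is in $V[G_j]$ a club $C$ and a strictly increasing, height preserving $f : S_x \res C \to T \res C$; put $R := f[S_x \res C]$, an uncountable subtree of $T$ with countable levels. The point is that $R$ is Suslin not merely in $V[G_j]$ but in $V[G_{\omega_2}]$: since $f$ is an order embedding and $f, f^{-1} \in V[G_j] \subseteq V[G_{\omega_2}]$, any uncountable antichain or chain of $R$ in $V[G_{\omega_2}]$ pulls back through $f^{-1}$ to an uncountable antichain or chain of $S_x \subseteq S$ in $V[G_{\omega_2}]$. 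But $S$ is Suslin in $V[G_{\omega_2}]$ by the first bullet, so $R$ has neither, and thus $T$ contains a Suslin subtree in $V[G_{\omega_2}]$, contradicting the choice of $T$. The symmetric argument handles $U$, proving the claim.

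The main obstacle is exactly this transfer step. The hypothesis ``$T$ has no Suslin subtree'' concerns the \emph{final} model, whereas Theorem 3.10 must be applied at an intermediate stage, and a subtree that is Suslin at an intermediate stage may be destroyed later; so one cannot naively reflect the hypothesis down to stage $j$. The resolution is to exploit that $S$ has already been secured Suslin in $V[G_{\omega_2}]$: the embedding $f$ ties the Suslinity of the candidate subtree $R$ to that of $S$, so $R$ persists as a genuine Suslin subtree of $T$ in $V[G_{\omega_2}]$, which is precisely what is forbidden. The remaining ingredients — verifying $\aleph_2$-c.c.\ and the preservation of the cardinal arithmetic needed to enumerate all pairs across the $\omega_2$ stages — are routine for countable support iterations of proper forcings of size $\le \aleph_1$ under the stated assumptions.
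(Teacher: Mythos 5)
Your proposal is essentially the paper's own proof: the same countable support iteration with the guard ``force with $\q(T_j,U_j)$ only when $\Vdash_S$ `$T_j,U_j$ are Aronszajn' holds at stage $j$,'' preservation via Theorems 2.2 and 3.10, $\omega_2$-c.c.\ bookkeeping, and the same key transfer step in which Proposition 3.1 produces $f : S_x \res C \to T \res C$ at an intermediate stage and the Suslinity of $S$ in the final model makes $f[S_x \res C]$ a Suslin subtree of $T$ there. One small inaccuracy: $f$ is only strictly increasing and height preserving, not necessarily an order embedding (it need not be injective or order-reflecting), so you cannot pull back chains through $f^{-1}$; but this is harmless, since uncountable antichains of $R = f[S_x \res C]$ pull back using strict increase alone, and an uncountable chain of $R$ is already an uncountable chain of $T$, which is excluded because $T$ is Aronszajn in $V[G_{\omega_2}]$.
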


In contrast to the aforementioned model of Abraham-Shelah which satisfies \textsf{GCH}, 
in our model we have that $2^\omega = 2^{\omega_1}$. 
This is necessary since by Section 2 of \cite{AS}, the weak diamond principle 
$2^\omega < 2^{\omega_1}$ implies the existence of 
$2^{\omega_1}$ many pairwise non-club isomorphic special Aronszajn trees.

\begin{proof}
Define by recursion a countable support forcing iteration 
$$
\langle \p_\alpha, \dot \q_\beta : \alpha \le \omega_2, \ \beta < \omega_2 \rangle
$$
of proper forcings which preserve $S$. 
After defining $\p_\alpha$, we consider by some bookkeeping 
a pair of Aronszajn trees $T_\alpha$ and $U_\alpha$ in $V^{\p_\alpha}$ and ask whether 
or not $S$ forces over $V^{\p_\alpha}$ that $T_\alpha$ and $U_\alpha$ remain Aronszajn. 
If so, then by Theorem 3.10 forcing with $\q(T_\alpha,U_\alpha)$ over 
$V^{\p_\alpha}$ preserves $S$ being Suslin. 
In this case, define $\dot \q_\alpha$ as a $\p_\alpha$-name for $\q(T_\alpha,U_\alpha)$, 
and otherwise let $\dot \q_\alpha$ be a $\p_\alpha$-name 
for the trivial forcing.

Consider on the other hand the case that there exists 
$x \in S$ and $W \in \{ T_\alpha, U_\alpha \}$ such that 
in $V^{\p_\alpha}$, $x \Vdash_S \text{``$W$ has a cofinal branch.''}$ 
By Proposition 3.1, there exists a club $C \subseteq \omega_1$ 
and a strictly increasing and height preserving function 
$f : S_x \res C \to W \res C$ in $V^{\p_\alpha}$. 
By upwards absoluteness, $f$ has the same property in $V^{\p_{\omega_2}}$. 
Since $S$, and hence $S_x$, is Suslin in $V^{\p_{\omega_2}}$, 
it follows that in $V^{\p_{\omega_2}}$ the tree $W$ contains a Suslin subtree, namely 
$f[S_x \res C]$.

By standard proper forcing iteration theorems and our cardinal arithmetic assumptions, 
for all $\beta < \omega_2$, $\p_\beta$ is $\omega_2$-c.c.\ and has 
cardinality at most $\omega_2$ (see Chapter VIII of \cite{SHELAHBOOK}). 
Thus, by a standard bookkeeping argument we can arrange that all pairs of Aronszajn 
trees in the final model have been handled at some stage less than $\omega_2$.
\end{proof}

Recall that for a coherent normal Suslin tree $S$, the forcing axiom 
$\textsf{PFA}(S)$ of Todorcevic \cite{PFAS} states that 
for any proper forcing $\p$ which preserves $S$ being Suslin, 
for any collection $\mathcal D$ of $\omega_1$ many dense subsets of 
$\p$, there exists a filter $G$ on $\p$ which meets every dense set in $\mathcal D$.

\begin{thm}
	The forcing axiom $\textsf{PFA}(S)$ implies that there exists a Suslin tree 
	(namely, $S$) and any two normal Aronszajn trees, neither 
	of which contains a Suslin subtree, are club isomorphic.
\end{thm}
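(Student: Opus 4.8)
The plan is to apply the forcing axiom $\textsf{PFA}(S)$ to the poset $\q(T,U)$, where $T$ and $U$ are the two given normal Aronszajn trees. To do this I must check that $\q(T,U)$ is a proper forcing which preserves $S$ being Suslin, and then isolate a family of $\omega_1$ many dense sets whose associated filter yields a club isomorphism between $T$ and $U$.

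Fix normal Aronszajn trees $T$ and $U$, neither of which contains a Suslin subtree. The central step is to show that
$$
\Vdash_S \text{``$T$ and $U$ are Aronszajn.''}
$$
Since $S$ is c.c.c.\ as a forcing poset, it preserves $\omega_1$ and the countability of the levels of $T$ and $U$, so $T$ and $U$ remain $\omega_1$-trees in $V^S$; hence being Aronszajn there is equivalent to having no cofinal branch. Suppose toward a contradiction that, say, $\Vdash_S \text{``$T$ is Aronszajn''}$ fails. Then some condition $x \in S$ forces that $T$ has a cofinal branch, i.e.\ $x \Vdash_S \text{``$T$ has a cofinal branch.''}$ By the consequence of Proposition 3.1 recorded immediately after its statement, there is then a club $C \subseteq \omega_1$ and a strictly increasing, height preserving map $f : S_x \res C \to T \res C$ whose range $f[S_x \res C]$ has no uncountable antichain, whence $T$ contains a Suslin subtree. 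This contradicts our hypothesis, so $\Vdash_S \text{``$T$ is Aronszajn,''}$ and symmetrically for $U$.

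With $\Vdash_S \text{``$T$ and $U$ are Aronszajn''}$ in hand, Theorem 3.10 gives $\Vdash_{\q(T,U)} \text{``$S$ is Suslin,''}$ so $\q(T,U)$ preserves $S$ being Suslin, while Theorem 3.6 gives that $\q(T,U)$ is proper. Thus $\q(T,U)$ is exactly the kind of forcing to which $\textsf{PFA}(S)$ applies. I then invoke the defining property of the Abraham--Shelah forcing, established in Section 5 of \cite{AS}: there is a family $\{ D_i : i < \omega_1 \}$ of dense subsets of $\q(T,U)$, defined from $T$, $U$, and the levels of $\omega_1$ and dense by the normality of $T$ and $U$, such that whenever a filter $G$ meets every $D_i$, the union $F := \bigcup \{ f : (x,f) \in G \}$ is a strictly increasing, height preserving injection which restricts to an isomorphism of $T \res C$ onto $U \res C$ for some club $C \subseteq \omega_1$. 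Applying $\textsf{PFA}(S)$ to $\q(T,U)$ and this family produces such a filter $G$, and the resulting $C$ together with $F \res (T \res C)$ witnesses that $T$ and $U$ are club isomorphic. As $S$ remains a Suslin tree and $T$, $U$ were arbitrary normal Aronszajn trees without Suslin subtrees, this finishes the argument.

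The main obstacle is the second step, the passage from the combinatorial hypothesis that neither tree contains a Suslin subtree to the preservation requirement $\Vdash_S \text{``$T$ and $U$ are Aronszajn,''}$ since this is precisely what licenses the application of $\textsf{PFA}(S)$ via Theorem 3.10. The remaining ingredients---properness of $\q(T,U)$ and the density facts producing the club isomorphism---are supplied by Theorem 3.6 and the Abraham--Shelah analysis in \cite{AS}, and are not reproved here.
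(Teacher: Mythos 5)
Your proposal is correct and follows essentially the same route as the paper: establish $\Vdash_S \text{``$T$ and $U$ are Aronszajn''}$ via Proposition 3.1 and the remark following it (contrapositively, a cofinal branch forced by some $x \in S$ would yield a Suslin subtree $f[S_x \res C]$ of $T$ or $U$), then apply $\textsf{PFA}(S)$ to $\q(T,U)$ using Theorems 3.6 and 3.10 and the Abraham--Shelah density analysis. The only difference is that you spell out the details the paper leaves implicit, including the point that $T$ and $U$ remain $\omega_1$-trees in $V^S$, which is a welcome elaboration rather than a deviation.
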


\begin{proof}
	Assume $\textsf{PFA}(S)$ and consider normal Aronszajn trees 
	$T$ and $U$ which have no Suslin subtree. 
	By Proposition 3.1 and the comments which follow it, 
	$S$ forces that $T$ and $U$ are Aronszajn. 
	By Theorem 3.10, $\q(T,U)$ preserves $S$ being Suslin. 
	Also $\q(T,U)$ is proper. 
	By choosing a filter meeting an appropriate collection of dense subsets of $\q(T,U)$, 
	it is easy to show that $T$ and $U$ are club isomorphic.
\end{proof}

Note that the consistency result of Theorem 4.1 does not use large cardinals, in 
contrast to $\textsf{PFA}(S)$. 
Also, we are not using the coherence of $S$ and the same conclusion holds for 
$\textsf{PFA}(U)$ for any Suslin tree $U$ whether it is coherent or not.

We move on to our second application which concerns the topic of free trees. 
Free trees were originally introduced by Jensen \cite{DEVLINJ} 
as a counterpoint to homogeneous Suslin trees, 
which are trees such that for any distinct $a$ and $b$ of the same height, there exists  
an automorphism of the tree which maps $a$ to $b$ and $b$ to $a$. 
Note that the property of being homogeneous is upwards absolute. 
In contrast, as the next theorem shows, free trees are highly malleable by forcing.

\begin{thm}
	Suppose that $S$ is a free normal Suslin tree. 
	Then for any positive integer $n$, there exists a c.c.c.\ forcing poset which 
	forces that $S$ is $n$-free but all derived trees of $S$ of dimension greater than 
	$n$ are special.
\end{thm}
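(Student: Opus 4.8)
The plan is to specialize, with a single Baumgartner‑style forcing, the tree of all distinct $(n+1)$‑tuples from $S$, and then to use Theorem 3.4 to show that this forcing preserves the Suslinness of every derived tree of dimension $n$. Write $D$ for the set of all $(n+1)$‑tuples $(c_0,\ldots,c_n)$ of pairwise distinct elements of $S$ all of the same height, ordered by $(c_0,\ldots,c_n) <_D (d_0,\ldots,d_n)$ iff $c_i <_S d_i$ for every $i \le n$. The predecessors of any tuple are well‑ordered and each level is countable, so $D$ is an $\omega_1$‑tree (possibly with several minimal nodes). A cofinal branch of $D$ would, via the coordinate projections, produce $n+1$ pairwise distinct cofinal branches of $S$, which is impossible since $S$ is Aronszajn; hence $D$ has no cofinal branch and, by Theorem 3.3, $\q := \q(D)$ is c.c.c.

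First I would verify that $\q$ forces every derived tree of dimension greater than $n$ to be special. Let $f$ be the generic specializing function for $D$. Fix a derived tree $U = S_{a_0}\otimes\cdots\otimes S_{a_n}$ of dimension $n+1$, with $\beta$ the common height of the $a_i$. Below $\beta$ the tree $U$ is the single chain $\{(a_0\res\gamma,\ldots,a_n\res\gamma) : \gamma < \beta\}$, which is countable, whereas $U\res[\beta,\omega_1)$ consists of tuples with pairwise distinct coordinates and so is a subtree of $D$. Thus $f\res(U\res[\beta,\omega_1))$ specializes the cofinal part of $U$, and adjoining an injection of the countable chain below $\beta$ on a disjoint set of colors yields a specializing function for all of $U$. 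Hence every derived tree of dimension $n+1$ is special in $V^\q$, and by Lemma 2.7 so is every derived tree of dimension greater than $n$.

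Second, and this is the heart of the matter, I would show that $\q$ preserves $n$‑freeness. Fix a derived tree $V = S_{b_0}\otimes\cdots\otimes S_{b_{n-1}}$ of dimension $n$, which is Suslin by hypothesis. By Theorem 3.4 (applied with the preserved Suslin tree $V$ in place of $S$ and with $D$ in place of $T$), $\q$ forces $V$ to be Suslin if and only if $\Vdash_V$ ``$D$ is Aronszajn.'' So it suffices to prove that forcing with $V$ adds no cofinal branch to $D$, i.e.\ that $V$ does not add $n+1$ pairwise distinct cofinal branches of $S$. Suppose toward a contradiction that some condition forces $n+1$ such branches. Forcing with $V$ reads off, from its generic branch, $n$ cofinal branches $\dot B_0,\ldots,\dot B_{n-1}$ of $S$ through $b_0,\ldots,b_{n-1}$; by pigeonhole at least one of the $n+1$ branches, say $\dot C$, is forced distinct from every $\dot B_i$. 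Refining, I would find a level $\mu$ and a condition $q=(q_0,\ldots,q_{n-1})\in V$ of height $\mu$ such that $q$ forces $\dot B_i(\mu)=q_i$ for each $i$ and decides $\dot C(\mu)=a$, where $a,q_0,\ldots,q_{n-1}$ are $n+1$ pairwise distinct nodes of height $\mu$ (possible once $\mu$ is above the splitting level of $\dot C$ from each $\dot B_i$, which forces $a$ incomparable to every $q_i$). Then $q\Vdash_V$ ``$\dot C$ is a cofinal branch of $S_a$.'' On the other hand, forcing with $V$ below $q$ is just forcing with $V_q := S_{q_0}\otimes\cdots\otimes S_{q_{n-1}}$, and since $q_0,\ldots,q_{n-1},a$ are distinct of common height $\mu$, the product $V_q\otimes S_a$ is a derived tree of $S$ of dimension $n+1$, hence Suslin by $(n+1)$‑freeness. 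By the basic product fact from Section 2, $V_q\otimes S_a$ and $V_q$ being Suslin give $\Vdash_{V_q}$ ``$S_a$ is Suslin,'' so $q\Vdash_V$ ``$S_a$ is Suslin,'' contradicting that $q$ forces $S_a$ to have a cofinal branch.

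The main obstacle is precisely this last lemma, and the delicate point is the choice of reduction: one must split $\dot C$ off from \emph{all} of the generic branches simultaneously and replace the base points $b_i$ by the advanced nodes $q_i=\dot B_i(\mu)$ before invoking freeness. If one kept the original $b_i$, the relevant product $V\otimes S_a$ would contain the comparable pairs $b_i \le_S \dot B_i(\mu)$ and would not be a derived tree of pairwise distinct elements, so $(n+1)$‑freeness would not apply; passing to the common height $\mu$ is exactly what turns the configuration into a genuine dimension‑$(n+1)$ derived tree. Once this lemma is established, Theorem 3.4 shows that $\q$ preserves every dimension‑$n$ derived tree Suslin, i.e.\ $\q$ forces $S$ to be $n$‑free, which together with the specialization established above completes the proof.
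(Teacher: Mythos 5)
Your proposal is correct in substance but takes a genuinely different route from the paper. The paper runs a finite support iteration of length $\omega_1$ of Baumgartner forcings $\q(S_{\vec a_\beta})$, specializing one derived tree of dimension $n+1$ at a time, with level-by-level bookkeeping and an inductive hypothesis guaranteeing that all unhandled derived trees (of mixed dimensions) stay Suslin; Suslin preservation at successor stages is exactly an application of Theorem 3.4, and limit stages are handled by Theorem 2.3. You instead specialize in a single step the tree $D$ of all distinct $(n+1)$-tuples, and reduce everything to one clean lemma: forcing with a dimension-$n$ derived tree $V$ adds no $n+1$ pairwise distinct cofinal branches of $S$. Your proof of that lemma --- splitting the extra branch off the $n$ generic ones and invoking freeness of the resulting common-height dimension-$(n+1)$ configuration --- is the same combinatorial punchline as the paper's successor-step analysis (there are more coordinates of $\vec a_\gamma$ than available targets), but packaged so that no iteration machinery or bookkeeping is needed; your argument also only uses $(n+1)$-freeness of $S$ rather than full freeness. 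The cost is that $D$ is not an Aronszajn tree, so you need Theorems 3.3 and 3.4 in a more general form than the paper ever invokes them, whereas the paper only ever specializes honest Aronszajn derived trees.

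Two points need repair. First, your claim that the levels of $D$ are countable, hence that $D$ is an $\omega_1$-tree, is false: any tuple containing two sibling nodes of $S$ (together with other distinct nodes of the same height) is minimal in $D$, and such tuples occur at cofinally many levels of $S$, so level $0$ of $D$ already has size $\aleph_1$. Consequently Theorem 3.4 as stated (for Aronszajn $T$) does not literally apply to $D$. Fortunately Theorem 3.3 is stated for arbitrary trees of height $\omega_1$, and the paper's proof of Theorem 3.4 goes through verbatim with ``$T$ is Aronszajn'' replaced by ``$T$ has no cofinal branch''; you should make that restatement explicit, and verify directly that $D$ is a tree of height $\omega_1$ (the predecessors of a tuple are its coordinatewise restrictions above the last splitting level, so they are well-ordered). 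Second, your refinement asking for a condition $q$ of height $\mu$ that itself decides $\dot C(\mu)$ is not in general available: a condition of height $\mu$ need not decide the branch at its own height, and a fusion along $\omega$ many decisions need not converge, since normality guarantees at most one, not at least one, upper bound at limit levels. But the matched-height decision is unnecessary: take $q$ of height $\mu \ge \eta$ deciding $\dot C(\eta) = a_0$ with $\eta$ past all splitting levels of $\dot C$ from the $\dot B_i$, then extend to $q' \le q$ deciding $\dot C(\mu) = c \in S_\mu$; since $a_0 \ne q_i \res \eta$ for each $i$, the nodes $q_0,\ldots,q_{n-1},c$ are pairwise distinct of common height $\mu$, so $S_{q_0} \otimes \cdots \otimes S_{q_{n-1}} \otimes S_c$ is a genuine derived tree of dimension $n+1$, Suslin by freeness, and the product fact from Section 2 gives $\Vdash_{V_q} \text{``$S_c$ is Suslin,''}$ contradicting that $q'$ forces $\dot C$ to be a cofinal branch of $S_c$. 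With these two repairs your argument is complete.
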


\begin{proof}
	Fix a positive integer $n$. 
	We define by recursion a finite support forcing iteration 
	$$
	\langle \p_\alpha, \dot \q_\beta : \alpha \le \omega_1, \ \beta < \omega_1 \rangle
	$$ 
	of c.c.c.\ forcings. 
	We will arrange that for each $\beta < \omega_1$, there exists an 
	$n+1$-tuple 
	$\vec a_\beta = (a_{\beta,0},\ldots,a_{\beta,n})$ of distinct elements of $S$ 
	of the same height such that  
	$$
	\Vdash_{\beta} \text{``$S_{\vec a_\beta}$ is Suslin and $\dot \q_\beta = 
	\q(S_{\vec a_\beta})$.''}
	$$
	By Theorem 3.3, each $\dot \q_\beta$ is forced to be c.c.c. 
	We bookkeep our forcings in such a way that for all 
	$\gamma < \beta < \omega_1$, 
	the height of the elements of $\vec a_\gamma$ 
	are less than or equal to the height of the elements of $\vec a_\beta$, 
	which is possible since the levels of $S$ are countable.

	Let us say that an $n+1$-tuple $\vec a$ of distinct elements of 
	$S$ of the same height has been \emph{handled} by stage $\delta$ 
	if $\Vdash_\delta \text{``$S_{\vec a}$ is special.''}$ 
	A given $n+1$-tuple $\vec a$ can be handled either explicitly by forcing with 
	$\q(S_{\vec a})$, or incidentally as a consequence of forcing other trees 
	to be special. 
	Our bookkeeping ensures that 
	all $n+1$-tuples of elements of one level of $S$ are handled before we move on 
	and handle the $n+1$-tuples of the next level. 
	We need to maintain at each step that every derived tree of $S$ 
	of dimension $n$ remains Suslin. 
	In order to make sure we can handle all $n+1$-tuples, we also 
	maintain that any derived tree of dimension $n+1$ which has not 
	been handled by a given stage $\delta$ is still Suslin in $V^{\p_\delta}$. 
	The following inductive hypothesis achieves these goals.
	
	\bigskip
	
	\noindent \textbf{Inductive Hypothesis on $\delta < \omega_1$}: 
	Let $\vec b = (b_0,\ldots,b_{m-1})$ be a tuple of distinct elements of $S$ 
	satisfying that for all $\gamma < \delta$:
	\begin{enumerate}
	\item the height of the elements of $\vec b$ 
	are greater than or equal to the height of the elements of $\vec a_\gamma$;
	\item for all $\gamma < \delta$ there exists $i \le n$ such that 
	for all $j < m$, $a_{\gamma,i} \not \le_S b_j$.
	\end{enumerate} 
	Then $\Vdash_\delta \text{``$S_{\vec b}$ is Suslin.''}$

	\bigskip
	
	Assume for now that the inductive hypothesis is true for all $\delta < \omega_1$, 
	and we describe how it can be used to prove the theorem. 
	To begin, we show that we can arrange all derived trees of 
	dimension $n+1$ to be special. 
	So consider $\delta < \omega_1$ and assume that a particular 
	$n+1$-tuple $\vec a = (a_0,\ldots,a_{n})$ has not been handled by stage $\delta$. 
	Since we are specializing derived trees one level of $S$ at a time, it follows that 
	for all $\gamma < \delta$, the height of the elements of $\vec a_\gamma$ is 
	less than or equal to the height of the elements of $\vec a$.

	We claim that $\Vdash_\delta \text{``$S_{\vec a}$ is Suslin.''}$ 
	By the inductive hypothesis, it suffices to show that 
	for all $\gamma < \delta$ there exists $i \le n$ such that 
	for all $j \le n$, $a_{\gamma,i} \not \le_S a_j$. 
	Suppose for a contradiction that $\gamma < \delta$ and for all $i \le n$ 
	there is some $j_i \le n$ such that $a_{\gamma,i} \le_S a_{j_i}$. 
	Since the elements of $\vec a_\gamma$ are distinct and $S$ is a tree, 
	it follows that the map $i \mapsto j_i$ from $n+1$ to $n+1$ 
	is an injection, and hence a bijection. 
	Consequently, $S_{a_{j_0}} \otimes \cdots \otimes S_{a_{j_n}}$ 
	is a subtree of $S_{\vec a_\gamma}$. 
	But $S_{\vec a_\gamma}$ is special in $V^{\p_{\gamma+1}}$, and hence in $V^{\p_\delta}$. 
	So $S_{a_{j_0}} \otimes \cdots \otimes S_{a_{j_n}}$ is also special in $V^{\p_\delta}$, 
	since any subtree of a special tree is special. 
	As $S_{\vec a}$ and $S_{a_{j_0}} \otimes \cdots \otimes S_{a_{j_n}}$ are isomorphic, 
	$S_{\vec a}$ is special in $V^{\p_\delta}$ as well, 
	which contradicts our assumption that $\vec a$ has not been handled by stage $\delta$. 

	In summary, any derived tree of dimension $n+1$ which has not been handled 
	by stage $\delta < \omega_1$ is still Suslin, and hence Aronszajn, in $V^{\p_\delta}$. 
	Thus, we can easily arrange by bookkeeping that the forcing iteration $\p_{\omega_1}$ 
	eventually handles all derived trees of $S$ of dimension $n+1$. 
	Therefore, $\p_{\omega_1}$ forces that all derived trees of $S$ of dimension 
	$n+1$ are special. 
	By Lemma 2.7, it follows that $\p_{\omega_1}$ forces that all derived trees of 
	$S$ of dimension greater than $n$ are special.
	
	Next let us see that 
	the inductive hypothesis implies that all derived 
	trees of $S$ of dimension $n$ are Suslin in $V^{\p_{\omega_1}}$. 
	So let $\vec b = (b_0,\ldots,b_{n-1})$ be an $n$-tuple of distinct elements 
	of $S$ of the same height. 
	To show that $S_{\vec b}$ is Suslin in $V^{\p_{\omega_1}}$, it suffices 
	to show that for all $\delta < \omega_1$, $S_{\vec b}$ is Suslin in $V^{\p_\delta}$. 
	As $\delta$ is countable, we can fix some $\xi < \omega_1$ 
	greater than the height of the elements 
	of $\vec b$ such that for all 
	$\gamma < \delta$, $\xi$ is greater than the height of the elements of $\vec a_\gamma$.
 
	By Lemma 2.6, in order to prove that $S_{\vec b}$ is Suslin in $V^{\p_\delta}$, 
	it suffices to show that for all $\vec c$ 
	in $S_{\vec b}$ whose elements have height $\xi$, 
	$S_{\vec c}$ is Suslin in $V^{\p_\delta}$. 
	So let such $\vec c = (c_0,\ldots,c_{n-1})$ be given. 
	By the inductive hypothesis, 
	it suffices to show that for all $\gamma < \delta$ there exists some $i \le n$ 
	such that for all $j < n$, $a_{\gamma,i} \not \le_S c_j$. 
	Let $\gamma < \delta$. 
	Let $\alpha$ be the height of the elements of $\vec a_\gamma$, 
	which is less than $\xi$ by the choice of $\xi$. 
	The set $\{ c_0 \res \alpha, \ldots, c_{n-1} \res \alpha \}$ has size at most $n$, 
	whereas $\{ a_{\gamma,0}, \ldots, a_{\gamma,n} \}$ has size $n+1$. 
	So we can choose $i \le n$ such that $a_{\gamma,i}$ is not 
	an element of $\{ c_0 \res \alpha, \ldots, c_{n-1} \res \alpha \}$. 
	Then for all $j < n$, $a_{\gamma,i} \not \le_S c_j$ and we are done.

	It remains to prove the inductive hypothesis.  
	Let $\delta < \omega_1$ and assume that the inductive hypothesis holds 
	for all $\beta < \delta$. 
	Let $\vec b = (b_0,\ldots,b_{m-1})$ be a tuple of distinct elements of $S$ of  
	the same height satisfying that for all $\gamma < \delta$:
	(a) the height of the elements of $\vec b$ 
	are greater than or equal to the height of the elements of $\vec a_\gamma$, and 
	(b) there exists $i \le n$ such that 
	for all $j < m$, $a_{\gamma,i} \not \le_S b_j$. 
	We will prove that $\Vdash_\delta \text{``$S_{\vec b}$ is Suslin.''}$

	Note that for all $\delta_0 < \delta$, $\vec b$ satisfies properties (a) and (b) 
	for all $\gamma < \delta_0$. 
	By the inductive hypothesis, for all $\delta_0 < \delta$, 
	$\Vdash_{\delta_0} \text{``$S_{\vec b}$ is Suslin.''}$ 
	If $\delta$ is a limit ordinal, then by Theorem 2.3 it follows that 
	$\Vdash_{\delta} \text{``$S_{\vec b}$ is Suslin''}$ and we are done.

	Suppose that $\delta = \beta+1$ is a successor ordinal. 
	Then as just observed, $\Vdash_{\beta} \text{``$S_{\vec b}$ is Suslin.''}$ 
	So it suffices to prove that in $V^{\p_\beta}$, 
	$\Vdash_{\q_\beta} \text{``$S_{\vec b}$ is Suslin.''}$ 
	Recall that $\q_\beta$ is equal to $\q(S_{\vec a_\beta})$. 
	By our assumptions on $\vec b$, we can 
	fix $i^* \le n$ such that for all $j < m$, 
	$a_{\beta,i^*} \not \le_S b_j$. 
	Let $\alpha$ be the height of the elements of $\vec b$.

	We work in $V^{\p_\beta}$. 
	In order to show that 
	$\Vdash_{\q(S_{\vec a_\beta})} \text{``$S_{\vec b}$ is Suslin''}$, 
	by Theorem 3.4 it suffices to show that 
	$\Vdash_{S_{\vec b}} \text{``$S_{\vec a_\beta}$ is Aronszajn.''}$ 
	By Lemma 2.4, it is enough to show that 
	$\Vdash_{S_{\vec b}} \text{``$S_{a_{\beta,i^*}}$ is Aronszajn.''}$ 
	By a simple argument, it suffices to show that whenever $a \in S_\alpha$ 
	is above $a_{\beta,i^*}$, then 
	$\Vdash_{S_{\vec b}} \text{``$S_a$ is Aronszajn.''}$ 
	So let such an $a$ be given. 
	By the choice of $i^*$, $a$ is not equal to any of $b_0,\ldots,b_{m-1}$.

	There are two possibilities to consider. 
	First, assume that 	for all $\gamma < \beta$, there exists some $i \le n$ 
	such that $a_{\gamma,i}$ is not less than or equal to 
	any of $b_0,\ldots,b_{m-1},a$. 
	Applying the inductive hypothesis for $\beta$ we get that 
	$S_{b_0} \otimes \cdots \otimes S_{b_{m-1}} \otimes S_a$ 
	is Suslin in $V^{\p_\beta}$. 
	As discussed in Section 2, it follows that 
	$S_{\vec b}$ forces that $S_a$ is Suslin, and hence 
	Aronszajn, and we are done.

	Secondly, assume that there exists some $\gamma < \beta$ such that 
	for all $i \le n$, $a_{\gamma,i}$ is less than or equal to one of $b_0,\ldots,b_{m-1},a$. 
	By assumption (b) about $\vec b$, there exists $i \le n$ such that 
	for all $j < m$, $a_{\gamma,i} \not \le_S b_j$, and therefore $a_{\gamma,i} \le_S a$. 
	For any $l \le n$ different from $i$, $a_{\gamma,i}$ and $a_{\gamma,l}$ are different, 
	so they cannot both be below $a$. 
	Hence, we can pick $j_l < m$ such that $a_{\gamma,l} \le_S b_{j_l}$. 
	Then the map $l \mapsto j_l$ is injective. 
	Define $d_i := a$ and for $l \le n$ different from $i$, $d_l := b_{j_l}$. 
	Then $S_{d_0} \otimes \cdots \otimes S_{d_n}$ is a subtree of $S_{\vec a_\gamma}$. 
	Now in $V^{\p_{\gamma+1}}$, and hence in $V^{\p_\beta}$, $S_{\vec a_\gamma}$ 
	is special. 
	Since $S_{d_0} \otimes \cdots \otimes S_{d_n}$ is a subtree of 
	$S_{\vec a_\gamma}$, it is special as well. 
	By Lemma 2.7, it follows that $S_{b_0} \otimes \cdots \otimes S_{b_{m-1}} \otimes S_a$ 
	is special. 
	By Lemma 2.5, $S_{\vec b}$ forces that $S_a$ is special and hence Aronszajn.
\end{proof}	

We mention a related result of Scharfenberger-Fabien \cite{gido} that 
under $\Diamond$, for each positive integer $n$ 
there exists an $n$-free tree which is not $n+1$-free.


\end{document}